\xapptocmd\normalsize{%
	\abovedisplayskip=12pt plus 3pt minus 9pt
	\abovedisplayshortskip=0pt plus 3pt
	\belowdisplayskip=12pt plus 3pt minus 9pt
	\belowdisplayshortskip=7pt plus 3pt minus 4pt
}{}{}
\theoremstyle{definition}
\newtheorem{definition}{Definition}[section]
\newtheorem{remark}[definition]{Remark}
\theoremstyle{plain}
\newtheorem{theorem}[definition]{Theorem}
\numberwithin{equation}{section}
\title[Constrained Radius Estimates of certain analytic functions]{\textbf{Constrained Radius Estimates of certain analytic functions}}
\author[M. Sharma]{Meghna Sharma}
\address{Department of Mathematics, University of Delhi, Delhi--110 007, India}
\email{meghnasharma203@gmail.com}
\author[N.K. Jain]{Naveen Kumar Jain}
\address {Department of Mathematics, Aryabhatta College, Delhi-110021,India}
\email{naveenjain05@gmail.com}
\author[S. Kumar]{Sushil Kumar}
\address {Bharati Vidyapeeth's college of Engineering, Delhi-110063, India}
\email{sushilkumar16n@gmail.com}
\date{}
\keywords{Carath\'{e}odory function; Starlike Functions; Radius problems}
\subjclass[2010]{30C45, 30C80}
\thanks{The first author is supported by Senior Research Fellowship from Council of Scientific and Industrial Research, New Delhi, Ref. No.:1753/(CSIR-UGC NET JUNE, 2018).}
\begin{document}
\maketitle
\begin{abstract}
Let $\mathcal{P}$ denote the Carath\'{e}odory class accommodating all the analytic functions $p$ having positive real part and satisfying $p(0)=1$.
In this paper, the second coefficient of the normalized analytic function $f$	defined on the open unit disc is constrained to define new classes of analytic functions.
The classes are characterised by the functions $f/g$ having positive real part or satisfying the inequality $|(f(z)/g(z))-1|<1$ such that
$f(z)(1-z^2)/z$ and $g(z)(1-z^2)/z$ are Carath\'{e}odory functions for some analytic function $g$.
This paper aims at determining radius of starlikeness for the introduced classes.
\end{abstract}

\section{Classes of Starlike Functions}
Denote by $\mathcal{A}$ the class of all analytic functions $f: \mathbb{D} \to \mathbb{C}$ defined on $\mathbb{D}:=\{z \in \mathbb{C}: |z|<1\}$ and satisfying the normalizing conditions $f(0)=f'(0)-1=0$.
Let $\mathcal{S}$ be the subclass of $\mathcal{A}$ consisting of univalent functions.
The function $p \in \mathcal{A}$ with $p(0)=1$ and having positive real part is labeled as a Carath\'{e}odory function, denoted by $\mathcal{P}$.
Given two subclasses $\mathcal{M}$ and $\mathcal{N}$ of $\mathcal{A}$, the largest number $\rho \in (0,1)$ is the $\mathcal{M}$- radius for the class $\mathcal{N}$ if for all $f \in \mathcal{N}$, $r^{-1}f(rz)$ belongs to $\mathcal{M}$, where $0<r \leq \rho$. Thus, the radius problem is essentially finding the desired value of $\rho$. For more details, see \cite{MR0708494}.

Proceedings of a conference from 1992 revealed that Ma and Minda \cite{MR1343506} consolidated various concepts of subordination to introduce a unified subclass $S^{*}(\varphi)$ of starlike functions where $\varphi \in \mathcal{P}$, is starlike with respect to $\varphi(0)=1$, symmetric about  real axis and satisfy $\varphi'(0)>0$.
This class is associated with the quantity $zf'(z)/f(z)$ subordinate to the function $\varphi$.
Assigning a particular value to the function $\varphi(z)$ yields various subclasses of $S^{*}$.
Essentially, $S^{*}(\varphi)$ is reduced to the class of Janowski starlike functions when $\varphi(z)=(1+Az)/(1+Bz)$ for $-1 \leq B < A \leq 1$.
Introduced by Robertson \cite{MR0783568}, $A=1-2\alpha$ ($0 \leq \alpha <1$) and $B=-1$ gives the subclass $S^{*}(\alpha)$ of starlike functions of order $\alpha$.
Following the similar approach, several authors established various subclasses of starlike functions.
Sok\'{o}\l\ and Stankiewicz \cite{MR1473947} introduced the class $S^{*}_{L}=S^{*}(\sqrt{1+z})$ associated with the region bounded by the right half of the lemniscate of bernoulli $\{w \in \mathbb{C}: |w^2 - 1|=1\}$,
Ronning \cite{MR1128729} introduced the class $S^{*}_{P}=S^{*}(1+(2/\pi^2)(\log((1+\sqrt{z})/(1-\sqrt{z})))^2)$ associated with parabolic region $\{u+i v: 2u-1>v^2\}$,
Mendiratta \emph{et al.} \cite{MR3394060} introduced the class $S^{*}_{e}=S^{*}(e^z)$ associated with the domain $\{w \in \mathbb{C}: |\log w|<1\}$,
Sharma \emph{et al.} \cite{MR3536076} introduced the class $S^{*}_{c}=S^{*}(1+4z/3+2z^2/3)$ associated with the cardioid shaped region $\{u+ i v : (9u^2+9v^2-18u+5)^2 - 16(9u^2+9v^2-6u+1)=0\}$,
Cho \emph{et al.} \cite{MR3913990} introduced the class $S^{*}_{\sin}=S^{*}(1+\sin z)$ associated with the sine function,
Raina and Sok\'{o}\l\ \cite{MR3419845} introduced the class $S^{*}_{\leftmoon}=S^{*}(z+\sqrt{1+z^2})$ associated with the lune $\{w \in \mathbb{C}:\operatorname{Re} w > 0, 2|w|>|w^2-1|\}$,
Kumar and Ravichandran \cite{MR3496681} introduced the class $S^{*}_{R}=S^{*}(1+(z/k)((k+z)/(k-z)))$, $k=1+\sqrt{2}$, associated with the rational function,
Brannan and Kirwan \cite{MR0251208} introduced the class $S^{*}_{\gamma}=S^{*}((1+z)/(1-z)^{\gamma})$, $0 \leq \gamma <1$, of strongly starlike functions of order $\gamma$.
Recently, authors \cite{MR4190740} introduced the class $S^{*}_{Ne}=S^{*}(1+z-z^3/3)$ associated with the nephroid domain $\{u+i v:((u-1)^2+v^2-4/9)^3-4v^2/3=0\}$.
Also, the class $S^{*}_{SG}=S^{*}(2/(1+e^{-z}))$ associated with modified sigmoid domain $\{w \in \mathbb{C}:|\log (w/ (2-w))|<1\}$ is introduced in \cite{MR4044913}.

\section{ Classes of Analytic Functions}

Owing to the eminent Bieberbach theorem, the estimate on the second coefficient in Maclaurin series of the function $f \in \mathcal{A}$ plays a vital role in the study of univalent functions.
We express $\mathcal{A}_{b}$ as the class of all functions $f \in \mathcal{A}$ having the form $f(z)=z+a_2z^2+\cdots$, where $|a_2|=2b$ for $0 \leq b \leq 1$.
Let $\mathcal{P}(\alpha)$ denote the class of analytic functions of the form
$p(z)=1+a_1 z+a_2 z^2+\cdots$
with $\operatorname{Re}(p(z))> \alpha$, $0 \leq \alpha <1$.
According to a result by Nehari \cite{MR0377031}, we have $|a_n| \leq 2(1-\alpha)$ for $p\in \mathcal{P}(\alpha)$.
Thus, we considered a subclass $\mathcal{P}_{b}(\alpha)$ of $\mathcal{P}(\alpha)$ having functions of the form
\[p(z)=1+2b(1-\alpha)z+a_2 z^2+\cdots, \quad |b| \leq 1.\]
Gronwall initiated the study of radius problems for the functions with
fixed second coefficient in early 1920s and since then, this aspect has been an active area of research.
Further, MacGregor \cite{MR0148892,Mcgregor} studied the radius problems for the class of functions having either the positive real part of the ratio $f/g$ or satisfying the inequality $|f/g-1|<1$.
Ali \emph{et al.} \cite{MR3722703} also made a contribution by finding various radius constants involving second order differential subordination.
Recently, authors \cite{MR4166151} studied radius problems on the class of
functions involving the ratio $f/g$.
For literature related to the applications of differential subordination for
functions with fixed second coefficient, see [add citation.]

Inspired by the above mentioned work and taking in account of the coefficient bound, we restricted the second coefficient of the function and introduced certain subclasses of analytic functions.

For $n \in \mathbb{N}$, let us consider the analytic functions of the form
$f(z)=z+\sum\limits_{n=2}^{\infty}a_{n}z^n$
such $f(z)(1-z^2)/z \in \mathcal{P}$.
Taking in note of the Bieberbach conjecture, we conclude that the coefficient $a_2$ is bounded by 2 and subsequently, the function $f(z)$ can be rewritten as
\[f(z)=z+2bz^2+\sum\limits_{n=3}^{\infty}a_{n}z^n, \quad |b| \leq 1.\]

Thenceforth, we now define the class $\mathcal{K}^{1}_{b}$ as follows:
\begin{definition}
For $|b| \leq 1$, the class $\mathcal{K}^{1}_{b}$ is defined as
\[\mathcal{K}^{1}_{b}:=\left\{f(z)=z+2bz^2+\sum\limits_{n=3}^{\infty}a_{n}z^n: \operatorname{Re}\left(\frac{f(z)(1-z^2)}{z}\right)>0, z \in \mathbb{D}\right\}.\]
\end{definition}
Consider the function $f_{b}: \mathbb{D} \to \mathbb{C}$ defined by
\begin{equation}\label{function-k3}
f_{b}(z)=\frac{z \left(1+z^2\right)}{\left(1-z^2\right) \left(1-2 b i z-z^2\right)}
\end{equation}
with $s_1(z)=z(b-iz)/(i+bz)$.
Then, it can be easily seen that
\[\frac{f_{b}(z)(1-z^2)}{z}=\frac{1-s_1(z)}{1+s_1(z)},\]
where $s_1(z)$ is the analytic function satisfying the hypothesis of Schwarz's Lemma in $\mathbb{D}$ and yields $\operatorname{Re}(f_{b}(z)(1-z^2)/z)>0$.
Hence, the class $\mathcal{K}^{1}_{b}$ is non empty.

Further, in order to define another class of analytic functions, consider the analytic functions of the form
\begin{equation}\label{fandg}
f(z)=z+\sum\limits_{n=2}^{\infty}f_{n}z^n \quad \text{and} \quad g(z)=z+\sum\limits_{n=2}^{\infty}g_{n}z^n
\end{equation}
such that $f(z)/g(z) \in \mathcal{P}$ and $g(z)(1-z^2)/z \in \mathcal{P}$.
Now, using these conditions, we get that the coefficients $f_2$ and $g_2$ satisfies $|f_2| < 4$ and $|g_2| \leq 2$.
Thus, the functions $f$ and $g$ are rewritten as
\[f(z)=z+4bz^2+\sum\limits_{n=3}^{\infty}f_{n}z^n, |b| \leq 1 \quad \text{and} \quad g(z)=z+2cz^2+\sum\limits_{n=3}^{\infty}g_{n}z^n, |c| \leq 1.\]
\begin{definition}
For $|b| \leq 1$ and $|c| \leq 1$, the class $\mathcal{K}^{2}_{b,c}$ is defined as
\[\mathcal{K}^{2}_{b,c}:=\left\{f \in \mathcal{A}_{4b}:\operatorname{Re}\left(\frac{f(z)}{g(z)}\right)>0, \operatorname{Re}\left(\frac{g(z)(1-z^2)}{z}\right)>0; \text{ for some } g \in \mathcal{A}_{2c}\right\}.\]
\end{definition}
Note that the functions $f_{b,c}, g_{b,c}: \mathbb{D} \to \mathbb{C}$ defined by
\begin{equation}\label{f1}
f_{b,c}(z)=\frac{z \left(1+z^2\right)^2}{\left(1-z^2\right) \left(1-2 c i z-z^2\right) \left(1-(4 b-2 c)i z - z^2\right)}
\end{equation}
and
\[g_{b,c}(z)=\frac{z \left(1+z^2\right)}{\left(1-z^2\right) \left(1-2 c i z - z^2\right)}\]
satisfy
\[\frac{f_{b,c}(z)}{g_{b,c}(z)}=\frac{1-s_2(z)}{1+s_2(z)} \quad \text{and} \quad  \frac{g_{b,c}(z)(1-z^2)}{z}=\frac{1-s_3(z)}{1+s_3(z)},\]
where \[s_2(z)=\frac{z((2b-c)-iz)}{(2b-c)z+i} \quad \text{and} \quad s_3(z)=\frac{z \left(c- i z\right)}{cz+i}, \quad |2b-c| \leq 1.\]
Note that $s_2$ and $s_3$ are analytic functions and satisfy the hypothesis of Schwarz's lemma in $\mathbb{D}$ and therefore, $\operatorname{Re}\left(\frac{f_{b,c}(z)}{g_{b,c}(z)}\right) > 0$ and $\operatorname{Re}\left(\frac{g_{b,c}(z)(1-z^2)}{z}\right) >0$.
Thus, $f_{b,c}$ and $g_{b,c}$ are the members of $\mathcal{K}^{2}_{b,c}$ and hence, the class is non-empty.

Following the similar pattern, let us assume that the functions $f$ and $g$ as defined in (\ref{fandg}) and satisfying the inequalities $|(f(z)/g(z))-1|<1$ and $\operatorname{Re}((g(z)(1-z^2))/z)>0$.
Note that the condition $|(f(z)/g(z))-1|<1$ yields $\operatorname{Re}(g(z)/f(z))>1/2$, thereby implying $|f_2| \leq 1+|g_2| \leq 3$.
Consequently, we consider the functions of the following form:
\[f(z)=z+3bz^2+\sum\limits_{n=3}^{\infty}f_{n}z^n, |b| \leq 1 \quad \text{and} \quad g(z)=z+2cz^2+\sum\limits_{n=3}^{\infty}g_{n}z^n, |c| \leq 1.\]

\begin{definition}
For $|b| \leq 1$ and $|c| \leq 1$, the class $\mathcal{K}^{3}_{b,c}$ is defined as
\[\mathcal{K}^{3}_{b,c}:=\left\{f \in \mathcal{A}_{3b}: \left|\frac{f(z)}{g(z)}-1\right| < 1 \text{ and } \operatorname{Re}\left(\frac{g(z)(1-z^2)}{z}\right)>0; \text{ for some } g \in \mathcal{A}_{2c}\right\}.\]
\end{definition}


\begin{remark}
For $b=-1$ and $c=-1$, the classes $\mathcal{K}_b^{1}$, $\mathcal{K}^{2}_{b,c}$ and $\mathcal{K}^{3}_{b,c}$ coincide with the classes $\mathcal{K}_3$, $\mathcal{K}_1$ and $\mathcal{K}_2$ respectively as discussed in \cite{MR4166151}.
\end{remark}

\section{Constraint Radius Estimates}

We constrained the second coefficient of the functions $f$ and $g$. The radii estimates are calculated for the classes involving these constraints.
For several recent results involving radius problems with fixed second coefficient, see[\cite{MR3531955},]

Various radius problems for these classes were discussed and sharpness was proved analytically.
Radii estimates for the class $\mathcal{K}^{1}_{b}$ are obtained in the following theorem.

\begin{theorem}\label{Theorem 1}
Let $\tilde{n}=|2b|$. For the class $\mathcal{K}^{1}_{b}$, the following results hold.
\begin{enumerate}[(i)]
\item The $S^{*}_{P}$ radius is the smallest positive real root of the equation
\begin{align}
1-\tilde{n} r-11 r^2-8 \tilde{n} r^3-9 r^4+\tilde{n} r^5+3 r^6 = 0. \label{K3-S-P}
\end{align}

\item  Let $\alpha \in [0,1)$. The $S^{*}(\alpha)$ radius is the smallest positive real root of the equation
\begin{align}
1-\alpha-\alpha \tilde{n} r -r^2 (5+\alpha)-4 \tilde{n} r^3-(5-\alpha)r^4 +\alpha \tilde{n}r^5 +(1+\alpha)r^6=0. \label{K3-S-alpha}
\end{align}
		
\item The $S^{*}_{L}$ radius is the smallest positive real root of the equation
\begin{align}
1-\sqrt{2}+(2 \tilde{n}-\sqrt{2} \tilde{n})r+6 r^2+(2 \tilde{n}+\sqrt{2} \tilde{n})r^3+(1+\sqrt{2})r^4=0.  \label{K3-S-L}
\end{align}
		
\item The $S^{*}_{e}$ radius is the smallest positive real root of the equation
\begin{align}
1-e+\tilde{n} r+(1+5 e) r^2+4 e \tilde{n} r^3-(1-5 e) r^4-\tilde{n} r^5-(1+e) r^6 = 0. \label{K3-S-e}
\end{align}
		
\item The $S^{*}_{c}$ radius is the smallest positive real root of the equation
\begin{align}
2-\tilde{n} r-16 r^2-12 \tilde{n} r^3-14 r^4+\tilde{n} r^5+4 r^6 = 0. \label{K3-S-c}
\end{align}
		
\item The $S^{*}_{\sin}$ radius is the smallest positive real root of the equation
\begin{align}
&\sin 1-(\tilde{n}-\tilde{n} \sin 1)r-(6-\sin 1)r^2-4 \tilde{n} r^3-(8+\sin 1)r^4-(3 \tilde{n}+\tilde{n} \sin 1)r^5 \nonumber \\
&-(2+\sin 1)r^6 = 0. \label{K3-S-sin}
\end{align}
		
\item The $S^{*}_{\leftmoon}$ radius is the smallest positive real root of the equation
\begin{align}
2-\sqrt{2}+(\tilde{n}-\sqrt{2} \tilde{n}) r-(4+\sqrt{2}) r^2-4 \tilde{n} r^3-(6-\sqrt{2}) r^4-(\tilde{n}-\sqrt{2} \tilde{n}) r^5+\sqrt{2} r^6 = 0. \label{K3-S-lune}
\end{align}
		
\item The $S^{*}_{R}$ radius is the smallest positive real root of the equation
\begin{align}
&3-2 \sqrt{2}+(2 \tilde{n}-2 \sqrt{2} \tilde{n}) r-(3+2 \sqrt{2}) r^2-4 \tilde{n} r^3-(7-2 \sqrt{2}) r^4-(2 \tilde{n}-2 \sqrt{2} \tilde{n}) r^5 \nonumber \\
&-(1-2 \sqrt{2}) r^6 = 0. \label{K3-S-R}
\end{align}
		
		
\item The $S^{*}_{N_{e}}$ radius is the smallest positive real root of the equation
\begin{align}
2-\tilde{n} r-16 r^2-12 \tilde{n} r^3-26 r^4-11 \tilde{n} r^5-8 r^6 = 0.  \label{K3-S-Ne}
\end{align}
		
\item The $S^{*}_{SG}$ radius is the smallest positive real root of the equation
\begin{align}
1-e+2 \tilde{n}r+(7+5 e)r^2+ 4\tilde{n}(1+e)r^3+(7+9 e) r^4+2 \tilde{n}(1+2e)r^5+(1+3 e) r^6 = 0.  \label{K3-S-SG}
\end{align}
\end{enumerate}
All estimates are sharp.
\end{theorem}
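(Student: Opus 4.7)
The plan is to convert the radius condition ``$zf'(z)/f(z) \prec \varphi(z)$'' into an explicit disk-containment problem. For $f \in \mathcal{K}^1_b$, I set $p(z) = f(z)(1-z^2)/z$, so $p \in \mathcal{P}$ and $p'(0) = 2b$. Logarithmic differentiation then gives
\[\frac{zf'(z)}{f(z)} = 1 + \frac{zp'(z)}{p(z)} + \frac{2z^2}{1-z^2}.\]

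First I would derive a sharp disk bound for $p$ on $|z|=r$ when its first Taylor coefficient is fixed with modulus $\tilde{n}$; this is the constrained Carath\'eodory disk estimate, and follows from applying the Schwarz--Pick inequality to $\omega(z)/z$, where $\omega = (1-p)/(1+p)$ satisfies $\omega(0)=0$ and $|\omega'(0)|=\tilde{n}/2$. A similar argument will bound $zp'(z)/p(z)$, and combining with the explicit image disk of $2z^2/(1-z^2)$ on $|z|=r$ will produce
\[\Bigl|\frac{zf'(z)}{f(z)} - C(r)\Bigr| \le R(r),\]
with $C(r)\in\mathbb{R}$ and $R(r)>0$ both rational in $r$ and $\tilde{n}$. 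For each Ma--Minda class $S^*(\varphi)$ in (i)--(x), I would then invoke the published sharp disk-containment criterion for $\varphi(\mathbb{D})$: the half-plane $\{\operatorname{Re} w>\alpha\}$ gives $C(r)-R(r)\ge\alpha$; the parabolic region gives $R(r)\le C(r)-1/2$; and the lemniscate, exponential, cardioid, sine, nephroid, sigmoid, lunar, and rational regions each have an explicit formula for the maximum radius of a disk centred on the real axis that fits inside $\varphi(\mathbb{D})$---these are available in the original papers cited in Section~1.

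In every case, the disk-touching equality---after clearing the common denominators---should reduce to precisely the polynomial equation (\ref{K3-S-P})--(\ref{K3-S-SG}) displayed in the theorem, so the required radius is its smallest positive root in $(0,1)$. The main obstacle will be the case-by-case algebraic verification that the simplified equality is exactly the polynomial stated; this is routine for $S^*(\alpha)$, $S^*_P$, $S^*_c$, and $S^*_e$, but noticeably more delicate for $S^*_L$, $S^*_{\leftmoon}$, and $S^*_R$ where the boundary criterion is quadratic in $C(r)$. Sharpness in every part will be established through the extremal function $f_b$ of (\ref{function-k3}): at a suitable point $z_0$ on $|z|=r$ (typically $z_0=ir$ after rotating so that $b$ is purely imaginary), a direct computation will show that $z_0 f_b'(z_0)/f_b(z_0)$ lies on $\partial \varphi(\mathbb{D})$ at the stated radius, so the disk bound is saturated and the polynomial root cannot be improved.
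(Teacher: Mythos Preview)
Your plan matches the paper's argument almost exactly: define $p(z)=f(z)(1-z^2)/z\in\mathcal{P}_b$, take logarithmic derivatives to obtain
\[
\frac{zf'(z)}{f(z)}=\frac{zp'(z)}{p(z)}+\frac{1+z^2}{1-z^2},
\]
invoke the constrained Carath\'eodory bound (the paper cites McCarty's \cite[Theorem~2]{MR0298014} rather than re-deriving it via Schwarz--Pick, but your route would reproduce the same inequality), combine with the image disk of $(1+z^2)/(1-z^2)$ to get the master disk \eqref{discforK3}, and then apply, case by case, the published disk-inclusion lemmas for each $\varphi(\mathbb{D})$.

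The one place your proposal falls short is sharpness. You plan to use the single extremal function $f_b$ of \eqref{function-k3} for all ten parts, but that function only realises the \emph{left} endpoint $C(r)-R(r)$ of the disk (at $z=\mp i\rho$). For the classes $S^*_L$, $S^*_{\sin}$, $S^*_{Ne}$ and $S^*_{SG}$ the centre $C(r)=(1+r^4)/(1-r^4)\ge 1$ forces the binding constraint to be the \emph{right} endpoint $C(r)+R(r)$, and the paper accordingly introduces a second extremal function
\[
f_b(z)=-\frac{z(1-2bz+z^2)}{(1-z^2)^2}
\]
(equation \eqref{K3-SL-sharp}) and evaluates at $z=\pm\rho$ to hit the boundary on that side. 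Your sharpness argument will need this second function (or an equivalent rotation argument) for those four parts; the single $f_b$ of \eqref{function-k3} will not suffice.
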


\begin{proof}
For the function $p \in \mathcal{P}(\alpha)$ and $|z|=r<1$, by \cite[Theorem 2]{MR0298014}, we have
\begin{equation}\label{mainlemma}
\left|\frac{zp'(z)}{p(z)}\right| \leq \frac{2(1-\alpha)r}{1-r^2}\frac{|b|r^2+2r+|b|}{(1-2\alpha) r^2 +2|b|(1-\alpha)r +1}
\end{equation}
where $|b| \leq 1$ and $\alpha \in [0,1)$.
The transform $w(z)=\frac{1+z^2}{1-z^2}$ maps the disc $|z| \leq r$ onto the disc
\begin{equation}\label{transformmap}
\left|w(z)-\frac{1+r^4}{1-r^4}\right| \leq \frac{2r^2}{1-r^4}.
\end{equation}

Suppose $f \in \mathcal{K}^{1}_{b}$ and let $p:\mathbb{D} \to \mathbb{C}$ be given by $p(z)=f(z)(1-z^2)/z$.
Note that the function $p \in \mathcal{P}_{b}$ and a simple calculation gives
\begin{equation}\label{logdiff-K3}
\frac{zf'(z)}{f(z)}=\frac{zp'(z)}{p(z)}+\frac{1+z^2}{1-z^2}.
\end{equation}
Using (\ref{mainlemma}), (\ref{transformmap}) and (\ref{logdiff-K3}), it is seen that $f$ maps the disc $|z| \leq r$ onto the disc
\begin{equation}\label{discforK3}
\left|\frac{zf'(z)}{f(z)}-\frac{1+r^4}{1-r^4}\right| \leq
\frac{\tilde{n} r+6 r^2+4 \tilde{n} r^3+6 r^4+\tilde{n} r^5}{(1 + \tilde{n} r + r^2) (1 - r^4)}.
\end{equation}
	
\begin{enumerate}[(i)]
\item Set $x(r):=1-\tilde{n} r-11 r^2-8 \tilde{n} r^3-9 r^4+\tilde{n} r^5+3 r^6.$
Then, $x(0)=1>0$ and
$x(1)=-8(\tilde{n}+2)<0$.
Hence by the virtue of intermediate value theorem, the equation (\ref{K3-S-P}) has a root lying in the interval $(0,1)$, denoted by $\rho_1$.

Denote the center of the disc in (\ref{discforK3}) by $a$.
Clearly, $a \geq 1$ for $r \in [0,1)$ and $a \leq 3/2$ if $r < 1/5^{\frac{1}{4}} \approx 0.66874$.
Thus, an application of \cite[Lemma 2.2]{MR1415180} gives that the disc (\ref{discforK3}) lies in the region $\{w \in \mathbb{C}: |w-1| < \operatorname{Re}w\}$ if
\[\frac{\tilde{n} r+6 r^2+4 \tilde{n} r^3+6 r^4+ \tilde{n} r^5}{(1 + \tilde{n} r + r^2) (1 - r^4)} \leq \frac{1+r^4}{1-r^4}-\frac{1}{2}.\]
Or equivalently, if
\[\frac{1-5r^2+8br^3-5r^4+r^6}{(-1+2 b r-r^2)(1-r^4)} \leq -\frac{1}{2}.\]
Thus,
\[\operatorname{Re}\left(\frac{zf'(z)}{f(z)}\right) > \left|\frac{zf'(z)}{f(z)}-1\right|\]
for $0<r\leq \rho_1$ proving that the number $\rho_1$ is the $S^{*}_{P}$ radius for the class $\mathcal{K}^{1}_{b}$.

For $b<0$, the function defined in (\ref{function-k3}), for $z=- i \rho_1$ satisfies
\begin{align*}
\operatorname{Re}\left(\frac{z(f_{b})'(z)}{f_{b}(z)}\right)
&=\frac{1 - 5 \rho_1^2 + 8 b \rho_1^3 - 5 \rho_1^4 + \rho_1^6}{(1 - 2 b \rho_1 + \rho_1^2) (1 - \rho_1^4)}\\
&=\left|\frac{2 b \rho_1-6 \rho_1^2+8 b \rho_1^3-4 \rho_1^4-2 b \rho_1^5+2 \rho_1^6}{\left(1-2 b \rho_1+\rho_1^2\right) \left(1-\rho_1^4\right)}\right|\\
&=\left|\frac{z(f_{b})'(z)}{f_{b}(z)}-1\right|
\end{align*}
illustrating the sharpness of bound.

\begin{remark}
Throughout the paper, we have proved the sharpness when $b<0$. The method can be imitated to prove for $b>0$. Hence, sharpness is attained at the point
\[z = \left\{
\begin{array}{ll}
-i \rho, & \text{if } b<0\\
i \rho,  & \text{if } b>0\\
\end{array}
\right.\]
for the classes $S^{*}_{P}$, $S^{*}(\alpha)$, $S^{*}_{e}$, $S^{*}_{c}$, $S^{*}_{\leftmoon}$, $S^{*}_{R}$ and for the classes $S^{*}_{L}$, $S^{*}_{\sin}$, $S^{*}_{Ne}$ and $S^{*}_{SG}$, sharpness is attained at the point
\[z = \left\{
\begin{array}{ll}
	 \rho, & \text{if } b<0\\
	- \rho, & \text{if } b>0.\\
\end{array}
\right.\]
\end{remark}

\begin{figure}[h!]
	\centering
	\begin{subfigure}[!]{0.11\linewidth}
		\includegraphics[width=\linewidth]{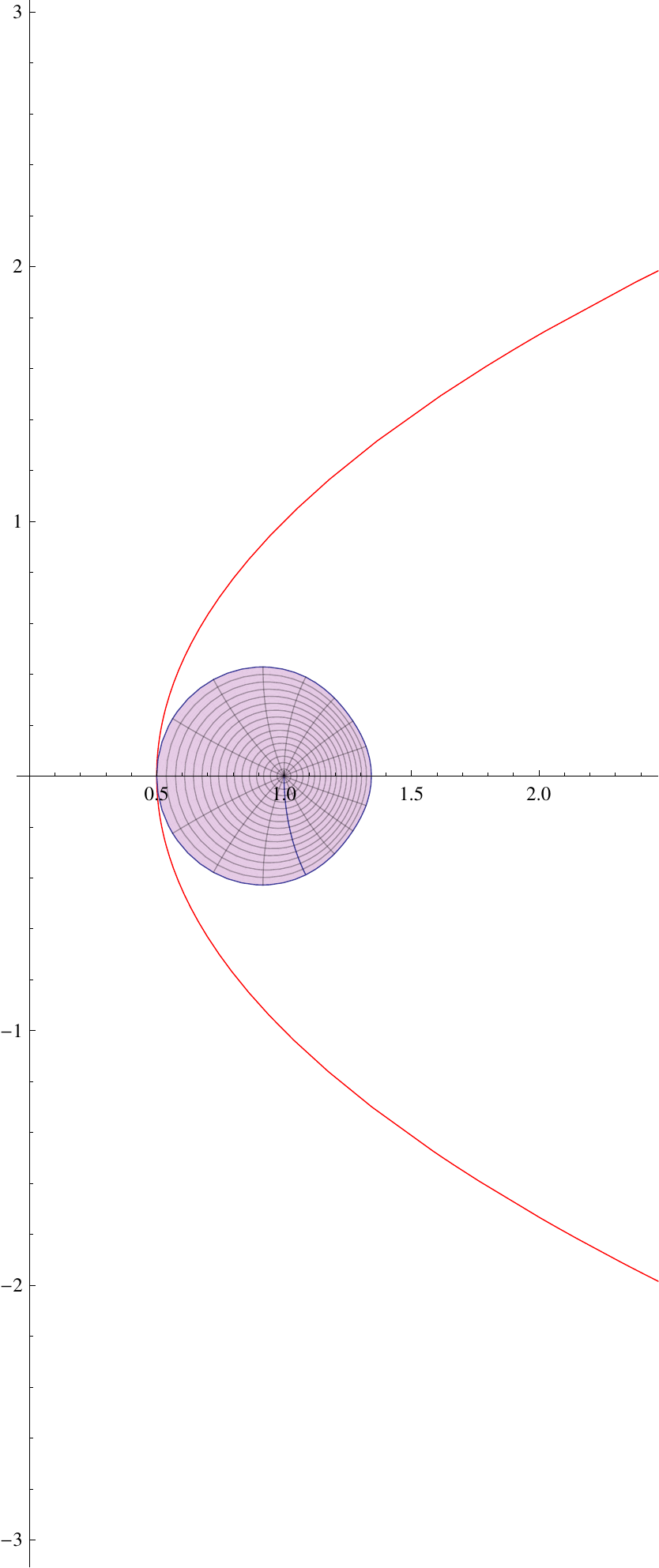}
		\caption{Sharpness of class $S_{P}^{*}$ with $\rho_{1}=0.2021347$ at $b=-1$}	\end{subfigure}
	\hspace{2.4em}
	\begin{subfigure}[!]{0.20\linewidth}
		\includegraphics[width=\linewidth]{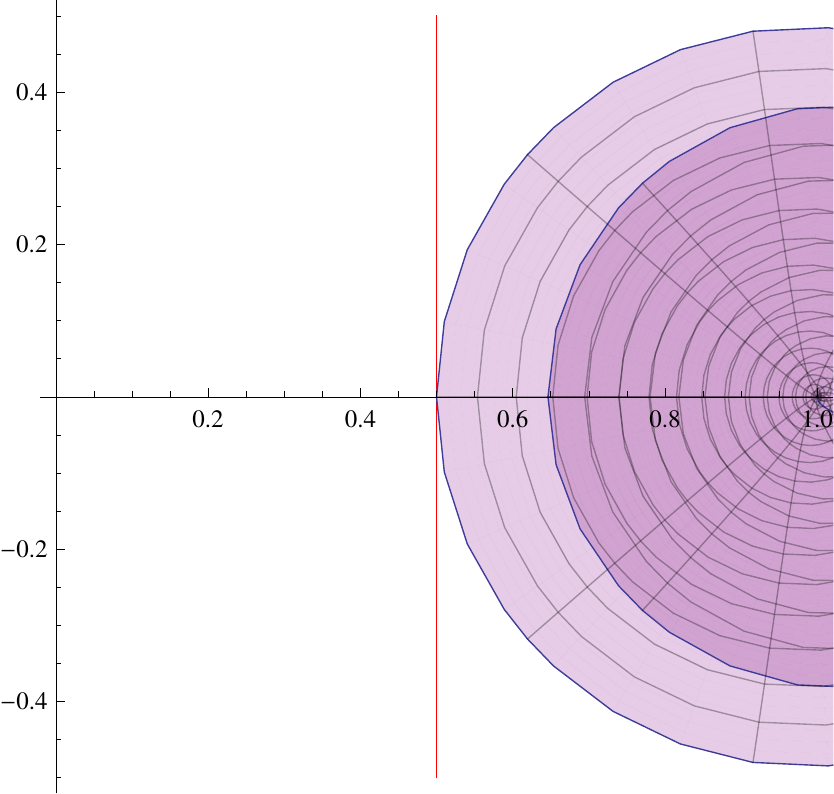}
		\caption{Sharpness of class $S^{*}(\alpha)$ with $\rho_{2}=0.202135$ at $b=-1$, $\alpha=0.5$}
	\end{subfigure}
    \hspace{2.4em}
    \begin{subfigure}[!]{0.21\linewidth}
	\includegraphics[width=\linewidth]{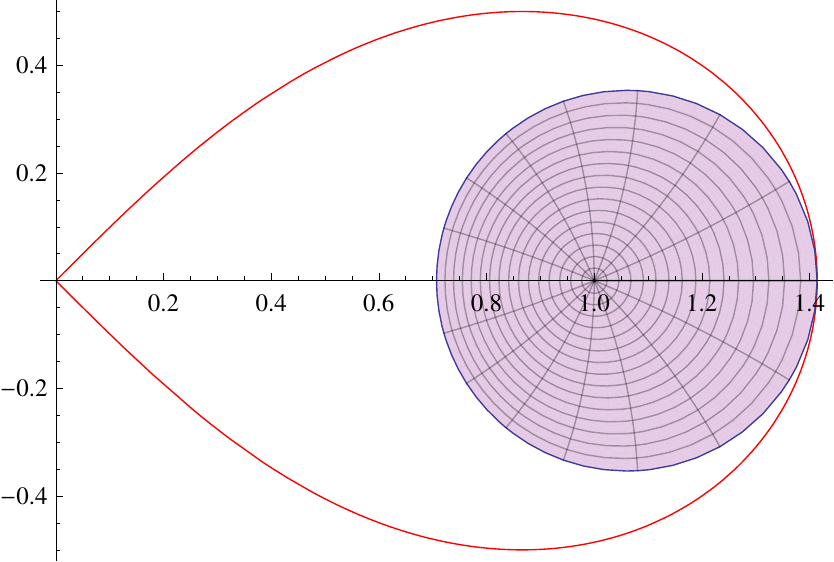}
	\caption{Sharpness of class $S^{*}_L$ with $\rho_3=0.171573$ at $b=-1$}
     \end{subfigure}
	\caption{Graphical illustration of sharpness for classes in Theorem (\ref{Theorem 1}) for particular choices of $b$.}
	\label{fig:coffee}
\end{figure}
	
\item Let $\alpha \in [0,1)$ and
let $\rho_2 \in (0,1)$ be the smallest positive real root of the equation (\ref{K3-S-alpha}) on account of the intermediate value theorem.
The function
\[d(r)=\frac{1 - 5 r^2 + 4 \tilde{n} r^3 - 5 r^4 + r^6}{(1 + \tilde{n} r + r^2) (1 - r^4)}\]
is a non-increasing function on $[0,1)$ and thus from (\ref{discforK3}), it follows that for $0<r \leq \rho_{2}$, we have,
\begin{align*}
\operatorname{Re}\left(\frac{zf'(z)}{f(z)}\right)
\geq \frac{1 - 5 r^2 + 4 \tilde{n} r^3 - 5 r^4 + r^6}{(1 + \tilde{n} r + r^2) (1 - r^4)}
\geq \alpha.
\end{align*}
This shows that the number $\rho_2$ is the $S^{*}(\alpha)$ radius for the class $\mathcal{K}^{1}_{b}$.
Since, for $f_{b}$ defined in (\ref{function-k3}) at $z=-i\rho_2$, we have
\[\operatorname{Re}\left(\frac{z(f_{b})'(z)}{f_{b}(z)}\right)
=\frac{1 - 5 \rho_2^2 + 8 b \rho_2^3 - 5 \rho_2^4 + \rho_2^6}{(1 - 2 b \rho_2 + \rho_2^2) (1 - \rho_2^4)}
=\alpha,\]
it follows that the estimate is sharp.

\item From (\ref{discforK3}), it follows that
\[
\left|\frac{zf'(z)}{f(z)}-1\right| \leq  \left|\frac{zf'(z)}{f(z)}-\frac{1+r^4}{1-r^4}\right|+ \frac{2 r^4}{1-r^4}
\leq \frac{r (\tilde{n} + 3 \tilde{n} r^2 + 2 r (3 + r^2))}{(1 - r^2) (1 + \tilde{n} r + r^2)}.
\]
Suppose $\rho_3$ denote the smallest real root of the equation in (\ref{K3-S-L}).
It is clear that $a \geq 1$ for $r \in [0,1)$ and simple computations show that $a < \sqrt{2}$ if $r < ((\sqrt{2}-1)/(\sqrt{2}+1))^{1/4} \approx 0.643594$.
Hence, using \cite[Lemma 2.2]{MR2879136}, the disc (\ref{discforK3}) lies in the region $\{w \in \mathbb{C}: |w^2-1| < 1\}$ provided
\[\frac{r (\tilde{n} + 3 \tilde{n} r^2 + 2 r (3 + r^2))}{(1 - r^2) (1 + \tilde{n} r + r^2)} \leq \sqrt{2}-1\]
Since, for $0< r \leq \rho_3$, we have
\[\left|\left(\frac{zf'(z)}{f(z)}\right)^2-1\right|<1.\]
This proves that the number $\rho_3$ is the $S^{*}_{L}$ radius for the class $\mathcal{K}^{1}_{b}$.
The function $f_{b}$ defined by
\begin{equation}\label{K3-SL-sharp}
f_{b}=-\frac{z (1-2 b z+z^2)}{(-1+z^2)^2}.
\end{equation}
is in the class $\mathcal{K}^1_{b}$ because it satisfies the condition $(1-z^2)f_{b}/z=(1-s(z))/(1+s(z))$, where $s(z)=(1-bz)/z(b-z)$.
Thus, for $f_{b}$ defined by (\ref{K3-SL-sharp}) at $z=\rho_{3}$, we have,
\begin{align*}
\left|\left(\frac{z(f_{b})'(z)}{f_{b}(z)}\right)^2-1\right|
&=\left|\left(\frac{1-4 b\rho_{3} +6 \rho_{3}^2-4 b \rho_{3}^3+\rho_{3}^4}{1-2 b \rho_{3}+2 b \rho_{3}^3-\rho_{3}^4}\right)^2-1\right|\\
&=|(\sqrt{2})^2-1|=1
\end{align*}
proving the sharpness.

\item The number $\rho_4$ is the smallest real root of the equation in (\ref{K3-S-e}).
Easy computations show that for $r \in [0,1)$, we have $a<e$ for $r< ((e-1)/(e+1))^{1/4} \approx 0.824495$.
Also, $a < (e+1/e)/2$ for $r<((e-1)/(e+1))^{1/2} \approx 0.679792$.
Thus, using \cite[Lemma 2.2]{MR3394060}, the disc (\ref{discforK3}) is contained in the region $\{w \in \mathbb{C}: |\log w| < 1\}$ provided
\[\frac{\tilde{n} r+6 r^2+4 \tilde{n} r^3+6 r^4+\tilde{n} r^5}{(1 + \tilde{n} r + r^2) (1 - r^4)} \leq \frac{1+r^4}{1-r^4}-\frac{1}{e}.\]
Or equivalently, if
\[\frac{1-5r^2+8br^3-5r^4+r^6}{(-1+2 b r-r^2)(1-r^4)} \leq -\frac{1}{e}\]
for $0<r\leq \rho_4$.
Thus, proving that the number $\rho_4$ is the $S^{*}_{e}$ radius for the class $\mathcal{K}^{1}_{b}$.
Moreover, for the function in (\ref{function-k3}) at $z=-i \rho_4$, we have,
\[\left|\log\left(\frac{z(f_{b})'(z)}{f_{b}(z)}\right)\right|=
\left|\log\left(\frac{1 - 5 \rho_4^2 + 8 b \rho_4^3 - 5 \rho_4^4 + \rho_4^6}{(1 - 2 b \rho_4 + \rho_4^2) (1 - \rho_4^4)}\right)\right|
=\left|\log\left(\frac{1}{e}\right)\right|=1,\]
showing that the bound is best possible.
	
\item The number $\rho_5$ is the smallest real root of the equation in (\ref{K3-S-c}).
For $r \leq 1/\sqrt{2} \approx 0.707107$, we have $a \leq 5/3$.
By \cite[Lemma 2.5]{MR3536076}, the disc (\ref{discforK3}) lies inside the region bounded by $\phi_{c}(\mathbb{D})$, where $\phi_{c}(z)=1+(4/3)z+(2/3)z^2$ if
\[\frac{\tilde{n} r+6 r^2+4 \tilde{n} r^3+6 r^4+\tilde{n} r^5}{(1 + \tilde{n} r + r^2) (1 - r^4)} \leq \frac{1+r^4}{1-r^4}-\frac{1}{3}.\]
Or equivalently, if
\[\frac{1-5r^2+8br^3-5r^4+r^6}{(-1+2 b r-r^2)(1-r^4)} \leq -\frac{1}{3}.\]
Hence, $f \in S^{*}_{c}$ for $0<r\leq \rho_5$ showing that the number $\rho_5$ is the $S^{*}_c$ radius for the class $\mathcal{K}^{1}_{b}$.
Further, for the function in (\ref{function-k3}) at $z=-i \rho_{5}$, we have,
\begin{align*}
\frac{z(f_{b})'(z)}{f_{b}(z)}
&=\frac{1 - 5 \rho_5^2 + 8 b \rho_5^3 - 5 \rho_5^4 + \rho_5^6}{(1 - 2 b \rho_5 + \rho_5^2) (1 - \rho_5^4)}\\
&=\frac{1}{3}= \phi_c(-1).
\end{align*}
Hence, proving the sharpness.
		
\item The number $\rho_6$ is the smallest real root of the equation in (\ref{K3-S-sin}).
When $1-\sin 1 < a \leq 1+\sin 1$, in view of \cite[Lemma 3.3]{MR3913990}, the function $f \in S^{*}_{\sin}$ if
\[\frac{\tilde{n} r+6 r^2+4 \tilde{n} r^3+6 r^4+\tilde{n} r^5}{(1 + \tilde{n} r + r^2) (1 - r^4)} \leq \sin 1 -\frac{2r^4}{1-r^4}.\]
Therefore, the disc (\ref{discforK3}) lies inside the region $\phi_{s}(\mathbb{D})$, where $\phi_{s}(z)=1+\sin z$ provided $0 < r \leq \rho_6$.
For the function in (\ref{K3-SL-sharp}) at $z=\rho_6$, we have
\begin{align*}
\frac{z(f_{b})'(z)}{f_{b}(z)}
&=\frac{1 - 5 \rho_6^2 + 8 b \rho_6^3 - 5 \rho_6^4 + \rho_6^6}{(1 - 2 b \rho_6 + \rho_6^2) (1 - \rho_6^4)}\\
&=1+\sin 1= \phi_s(1).
\end{align*}
Hence, the estimate is sharp.

\item  The number $\rho_7$ is the smallest real root of the equation in (\ref{K3-S-lune}).
For $\sqrt{2}-1 < a < \sqrt{2}+1$, applying \cite[Lemma 2.1]{MR3718233},
the disc (\ref{discforK3}) is contained in the region $\{w \in \mathbb{C}: 2|w| > |w^2-1|\}$ if
\[\frac{\tilde{n} r+6 r^2+4 \tilde{n} r^3+6 r^4+\tilde{n} r^5}{(1 + \tilde{n} r + r^2) (1 - r^4)} \leq \frac{1+r^4}{1-r^4}+1-\sqrt{2}.\]
Since, for $0<r \leq \rho_7$, we have
\[2\left|\frac{zf'(z)}{f(z)}\right| > \left|\left(\frac{zf'(z)}{f(z)}\right)^2-1\right|.\]
For sharpness, note that the function $f_{b}$ given in (\ref{function-k3}) at $z=-i \rho_7$ satisfies
\begin{align*}
\left|\left(\frac{z(f_{b})'(z)}{f_{b}(z)}\right)^2-1\right|
&=\left|\left(\frac{1 - 5 \rho_7^2 + 8 b \rho_7^3 - 5 \rho_7^4 + \rho_7^6}{(1 - 2 b \rho_7 + \rho_7^2) (1 - \rho_7^4)}\right)^2-1\right|\\
&=2\left|\frac{1 - 5 \rho_7^2 + 8 b \rho_7^3 - 5 \rho_7^4 + \rho_7^6}{(1 - 2 b \rho_7 + \rho_7^2) (1 - \rho_7^4)}\right|
=2\left|\frac{z(f_{b})'(z)}{f_{b}(z)}\right|.
\end{align*}
This shows that the number $\rho_7$ is the $S^{*}_{\leftmoon}$ radius for the class $\mathcal{K}^{1}_{b}$.

\item The number $\rho_8$ is the smallest real root of the equation in (\ref{K3-S-R}).
Observe that for  $r \leq ((\sqrt{2}-1)/(\sqrt{2}+1))^{1/4} \approx 0.643594$, we have $2\sqrt{2}-2 < a \leq \sqrt{2}$.
Then, by \cite[Lemma 2.2]{MR3496681}, the function $f \in S^{*}_{R}$ if
\[\frac{\tilde{n} r+6 r^2+4 \tilde{n} r^3+6 r^4+\tilde{n} r^5}{(1 + \tilde{n} r + r^2) (1 - r^4)} \leq 2- 2 \sqrt{2} +\frac{1+r^4}{1-r^4}.\]
Thus, the disc (\ref{discforK3}) is contained in the region $\phi_{0}(\mathbb{D})$, where $\phi_{0}(z):=1+(z/k)((k+z)/(k-z)), k=1+\sqrt{2}$ whenever $0 < r \leq \rho_{8}$.
For the function $f_{b}$ in (\ref{function-k3}), we have at $z=-i \rho_8$,
\begin{align*}
\frac{z(f_{b})'(z)}{f_{b}(z)}
&=\frac{1 - 5 \rho_8^2 + 8 b \rho_8^3 - 5 \rho_8^4 + \rho_8^6}{(1 - 2 b \rho_8 + \rho_8^2) (1 - \rho_8^4)}\\
&=2\sqrt{2}-2 = \phi_0(-1).
\end{align*}
Thus, the radius obtained is sharp.
	

\item Let $\rho_{9}$ denote the smallest real root of the equation (\ref{K3-S-Ne}). For $1 \leq a < 5/3$, using \cite[Lemma 2.2]{MR4190740} gives that $f \in S^{*}_{Ne}$ if
\[\frac{\tilde{n} r+6 r^2+4 \tilde{n} r^3+6 r^4+\tilde{n} r^5}{(1 + \tilde{n} r + r^2) (1 - r^4)} \leq \frac{5}{3}-\frac{1+r^4}{1-r^4}.\]
Hence, the disc (\ref{discforK3}) lies in the region $\phi_{Ne}(\mathbb{D})$, where $\phi_{Ne}(z)=1+z-z^3/3$ provided $0 < r \leq \rho_{9}$.
For sharpness, consider $f_{b}$ in (\ref{K3-SL-sharp}) at $z=\rho_{9}$,
\begin{align*}
\left|\frac{z(f_{b})'(z)}{f_{b}(z)}\right|
&=\left|\frac{1-4 b \rho_{9}+6 \rho_{9}^2-4 b \rho_{9}^3+ \rho_{9}^4}{\left(1-2 b \rho_{9}+\rho_{9}^2\right) \left(1-\rho_{9}^2\right)}\right|\\
&=\frac{5}{3}= \phi_{Ne}(1).
\end{align*}

\item Let $\rho_{10}$ denote the smallest real root of the equation (\ref{K3-S-SG}) and suppose $1 \leq a < 2e/(1+e)$. Using \cite[Lemma 2.2]{MR4044913}, it follows that $f \in S^{*}_{SG}$ if
\[\frac{\tilde{n} r+6 r^2+4 \tilde{n} r^3+6 r^4+\tilde{n} r^5}{(1 + \tilde{n} r + r^2) (1 - r^4)} \leq \frac{2e}{1+e}-\frac{1+r^4}{1-r^4}.\]
Hence, the disc (\ref{discforK3}) lies in the region $\phi_{SG}(\mathbb{D})$, where $\phi_{SG}(z)=2/(1+e^{-z})$ provided $0 < r \leq \rho_{10}$.
Further, if $w=z(f_{b})'(z)/(f_{b})(z)$ for $f_{b}$ in (\ref{K3-SL-sharp}), then at $z=\rho_{10}$, we have,
\begin{align*}
\left|\log\left(\frac{w}{2-w}\right)\right|
&=\left|\log\left(\frac{1-4 b \rho_{10} +6 \rho_{10}^2-4 b \rho_{10}^3+ \rho_{10}^4}{1-6 \rho_{10} ^2+8 b \rho_{10}^3-3 \rho_{10}^4}\right)\right|\\
&=\left|\log\left(e\right)\right|
=1
\end{align*}
\end{enumerate}
and hence the estimate is sharp.
\end{proof}

The following theorem provides various starlikeness for the class $\mathcal{K}^{2}_{b,c}$.

\begin{theorem}\label{theorem 2}
If $m=|4b-2c| \leq 2$ and $n=|2c|$, then for class $\mathcal{K}^{2}_{b,c}$, the following statements hold:
\begin{enumerate}[(i)]
\item The $S^{*}_{P}$ radius is the smallest positive real root of the equation
\begin{align}
&1-(m+n) r-3(6+ m n) r^2-17( m+ n) r^3-12(3+ m n) r^4-15( m+ n)r^5 \nonumber \\
&-(14+m n) r^6+(m+n) r^7+3 r^8=0. \label{K1-S-P}
\end{align}

\item For any $\alpha \in [0,1)$, the $S^{*}(\alpha)$ radius is the smallest positive real root of the equation
\begin{align}
&1-\alpha - \alpha(m+n)r - (mn+\alpha mn +8 +2 \alpha)r^2 - (m + n)(8 + \alpha)r^3 - (6 m n + 18) r^4 \nonumber \\
&- (8 - \alpha)(m+n)r^5 - (mn-\alpha mn+8-2\alpha)r^6 + \alpha(m + n) r^7 + (1 + \alpha)r^8=0. \label{K1-S-alpha}
\end{align}

\item The $S^{*}_{L}$ radius is the smallest positive real root of the equation
\begin{align}
&1-\sqrt{2}+\left(2-\sqrt{2}\right) (m+n) r+(11-\sqrt{2}+3 m n-\sqrt{2} m n) r^2+(8 m+8 n)r^3 \nonumber \\
&+(11+\sqrt{2}+3 m n+\sqrt{2} m n) r^4+\left(2+\sqrt{2}\right) (m+n) r^5+(1+\sqrt{2}) r^6=0. \label{K1-S-L}
\end{align}

\item The $S^{*}_{e}$ radius is the smallest positive real root of the equation
\begin{align}
&1-e+(m+n) r+(2+8 e+m n+e m n) r^2+(m+8 e m+n+8 e n) r^3+(18 e+6 e m n) r^4 \nonumber \\
&-(m-8 e m+n-8 e n) r^5-(2-8 e+m n-e m n) r^6-(m+n) r^7-(1+e) r^8 =0. \label{K1-S-e}
\end{align}

\item The $S^{*}_{c}$ radius is the smallest positive real root of the equation
\begin{align}
&2-(m+n) r-(26+4 m n) r^2-(25 m+25 n) r^3-(54+18 m n) r^4-(23 m+23 n) r^5\nonumber \\
&-(22+2 m n) r^6+(m+n) r^7+4 r^8=0. \label{K1-S-c}
\end{align}

\item The $S^{*}_{\sin}$ radius is the smallest positive real root of the equation
\begin{align}
&\sin 1 - (m + n - m \sin 1 - n \sin 1)r - (10 + 2 m n - 2 \sin 1 - m n \sin 1)r^2 - (9 m + 9 n \nonumber \\
&- m \sin 1 - n \sin 1)r^3 -(22 + 6 m n) r^4 -(11 m + 11 n + m \sin 1 + n \sin 1)r^5 -(14 + 4 m n \nonumber \\
&+ 2 \sin 1 + m n \sin 1)r^6 - (3 m + 3 n + m \sin 1 + n \sin 1)r^7 - (2 + \sin 1)r^8 =0. \label{K1-S-sin}
\end{align}

\item The $S^{*}_{\leftmoon}$ radius is the smallest positive real root of the equation
\begin{align}
&(-2+\sqrt{2})+(-3+\sqrt{2}) (m+n) r+(2 (-7+\sqrt{2})+(-4+\sqrt{2}) m n) r^2+(-11+\sqrt{2}) \nonumber\\
&(m+n) r^3+(-22-6 m n) r^4-(9+\sqrt{2}) (m+n) r^5+(-2 (5+\sqrt{2})-(2+\sqrt{2}) m n) r^6 \nonumber\\
&-(1+\sqrt{2}) (m+n) r^7-\sqrt{2} r^8=0. \label{K1-S-lune}
\end{align}

\item The $S^{*}_{R}$ radius is the smallest positive real root of the equation
\begin{align}
&3-2 \sqrt{2}+(2 m-2 \sqrt{2} m+2 n-2 \sqrt{2} n) r-(4+4 \sqrt{2}-m n+2 \sqrt{2} m n) r^2-(6 m+2 \sqrt{2} m \nonumber \\
&+6 n +2 \sqrt{2} n) r^3-(18+6 m n) r^4-(10 m-2 \sqrt{2} m+10 n-2 \sqrt{2} n) r^5 -(12-4 \sqrt{2}+3 m n \nonumber \\
&-2 \sqrt{2} m n) r^6-(2 m-2 \sqrt{2} m+2 n-2 \sqrt{2} n) r^7-(1-2 \sqrt{2}) r^8 =0. \label{K1-S-R}
\end{align}	


\item The $S^{*}_{N_{e}}$ radius is the smallest positive real root of the equation
\begin{align}
&2-(m+n) r-(26+4 m n) r^2-(25 m+25 n) r^3-(66+18 m n) r^4-(35 m+35 n) r^5 \nonumber \\
&-(46+14 m n) r^6-(11 m+11 n) r^7-8 r^8=0. \label{K1-S-Ne}
\end{align}

\item The $S^{*}_{SG}$ radius is the smallest positive real root of the equation
\begin{align}
&1-e+(2 m+2 n) r+(12+8 e+3 m n+e m n) r^2+(10 m+8 e m+10 n+8 e n) r^3+(22 \nonumber \\
&+22 e+6 m n+6 e m n) r^4 +(10 m+12 e m+10 n+12 e n) r^5 +(12+16 e+3 m n+5 e m n) r^6 \nonumber \\
&+(2 m+4 e m+2 n+4 e n) r^7+(1+3 e) r^8 =0. \label{K1-S-SG}
\end{align}
\end{enumerate}
All estimates are sharp.
\end{theorem}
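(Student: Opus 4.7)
The plan is to mimic the strategy of Theorem \ref{Theorem 1}, but now with two Carath\'{e}odory factors instead of one. For $f\in\mathcal{K}^{2}_{b,c}$ with associated $g$, I would introduce
\[
p_1(z)=\frac{f(z)}{g(z)}=1+(4b-2c)z+\cdots,\qquad p_2(z)=\frac{g(z)(1-z^2)}{z}=1+2cz+\cdots,
\]
both members of $\mathcal{P}$. Since $|4b-2c|=m\le 2$ and $|2c|=n\le 2$, these belong to the fixed--second--coefficient subclasses to which inequality (\ref{mainlemma}) applies with parameter $m/2$ and $n/2$ respectively (taking $\alpha=0$). Logarithmic differentiation of the factorisation $f(z)/z=p_1(z)p_2(z)/(1-z^2)$ gives the master identity
\[
\frac{zf'(z)}{f(z)}=\frac{zp_1'(z)}{p_1(z)}+\frac{zp_2'(z)}{p_2(z)}+\frac{1+z^2}{1-z^2},
\]
which is the analogue of (\ref{logdiff-K3}) with one extra $p$--term.

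Next I would apply (\ref{mainlemma}) to each $p_i$ and (\ref{transformmap}) to the last summand to obtain the disc estimate
\begin{equation}\label{discforK1}
\left|\frac{zf'(z)}{f(z)}-\frac{1+r^4}{1-r^4}\right|\le \frac{2r^2}{1-r^4}+\frac{r(mr^2+4r+m)}{(1-r^2)(1+mr+r^2)}+\frac{r(nr^2+4r+n)}{(1-r^2)(1+nr+r^2)}.
\end{equation}
After placing the right--hand side over the common denominator $(1-r^2)(1+r^2)(1+mr+r^2)(1+nr+r^2)$, the resulting numerator is a polynomial in $r$ of degree $8$, symmetric in $m$ and $n$; this is precisely the source of the degree--$8$ polynomials (\ref{K1-S-P})--(\ref{K1-S-SG}) once we compare against the target region for each $\varphi$.

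The ten items are then handled in parallel to Theorem \ref{Theorem 1}. For part (i) I compare (\ref{discforK1}) with $\{|w-1|<\RE w\}$ using \cite[Lemma~2.2]{MR1415180}; for (ii) I use that the center $(1+r^4)/(1-r^4)$ exceeds $\alpha$ and the right--hand side of (\ref{discforK1}) bounds the deviation below $1-\alpha$; parts (iii)--(x) use, respectively, the lune lemma \cite[Lemma~2.2]{MR2879136}, the exponential lemma \cite[Lemma~2.2]{MR3394060}, the cardioid lemma \cite[Lemma~2.5]{MR3536076}, the sine lemma \cite[Lemma~3.3]{MR3913990}, \cite[Lemma~2.1]{MR3718233}, \cite[Lemma~2.2]{MR3496681}, the nephroid lemma \cite[Lemma~2.2]{MR4190740} and the sigmoid lemma \cite[Lemma~2.2]{MR4044913}, checking in each case that the center of the disc in (\ref{discforK1}) lies in the admissible interval for the lemma (e.g.\ $a\le 3/2$ for $S^{*}_P$, $a<e$ and $a<(e+1/e)/2$ for $S^{*}_e$, $\sqrt{2}-1<a<\sqrt{2}+1$ for $S^{*}_{\leftmoon}$). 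Clearing denominators in the resulting scalar inequality reproduces the stated polynomials, and the intermediate value theorem (noting that each polynomial is positive at $r=0$ and negative at $r=1$) supplies a smallest root in $(0,1)$.

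Sharpness is shown using the explicit extremal pair $(f_{b,c},g_{b,c})$ from (\ref{f1}); at $z=\pm i\rho$ (or $z=\pm\rho$ for the classes whose boundary point is on the real axis, following the convention already established in the earlier remark), a direct computation of $zf_{b,c}'(z)/f_{b,c}(z)$ reduces, after using $s_2,s_3$ being Schwarz functions with prescribed values, to a quotient whose numerator is exactly the defining polynomial of the corresponding $\rho_i$, forcing equality in the target inclusion. The main obstacle will not be any single step but the bookkeeping: keeping the $m$-- and $n$--dependent factors $(1+mr+r^2)$ and $(1+nr+r^2)$ separate through the calculation, and then verifying that after clearing denominators the numerator matches the exact stated polynomial -- in particular that cross terms yield the $mn$--coefficients appearing in (\ref{K1-S-P})--(\ref{K1-S-SG}). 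Once this algebraic match is established for the generic bound (\ref{discforK1}), the ten parts follow uniformly.
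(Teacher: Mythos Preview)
Your proposal is correct and follows essentially the same route as the paper: define $p_1=f/g$ and $p_2=g(1-z^2)/z$, obtain the logarithmic identity, apply (\ref{mainlemma}) with $\alpha=0$ to each factor and (\ref{transformmap}) to the last term, clear denominators to get the degree--$8$ disc estimate, and then invoke the same ten disc--inclusion lemmas. One small refinement for the sharpness arguments: the extremal (\ref{f1}) works at $z=\pm i\rho$ for the classes whose boundary is touched on the left ($S^{*}_{P}$, $S^{*}(\alpha)$, $S^{*}_{e}$, $S^{*}_{c}$, $S^{*}_{\leftmoon}$, $S^{*}_{R}$, $S^{*}_{Ne}$, $S^{*}_{SG}$), but for $S^{*}_{L}$ and $S^{*}_{\sin}$ the paper uses a second extremal pair,
\[
f_{b,c}(z)=\frac{z(1-2cz+z^2)(1-(4b-2c)z+z^2)}{(1-z^2)^{3}},\qquad g_{b,c}(z)=\frac{z(1-2cz+z^2)}{(1-z^2)^{2}},
\]
evaluated at $z=\pm\rho$; the function (\ref{f1}) alone will not give a real value of $zf'/f$ at a real point, so you will need this variant to close those two cases.
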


\begin{proof}
Let $f \in \mathcal{K}^{2}_{b,c}$. Choose a function $g \in \mathcal{A}$ such that for all $z \in \mathbb{D}$, $\operatorname{Re}(f(z)/g(z))>0$ and $\operatorname{Re}((g(z)(1-z^2))/z)>0$.
Observe that the complex valued functions $p_1,p_2$ defined on the unit disc $\mathbb{D}$ by $p_1(z)=f(z)/g(z)$
and
$p_2(z)=g(z)(1-z^2)/z$
are functions in $\mathcal{P}(0)$.
Moreover, we have
\[f(z)=\frac{z p_1(z)p_2(z)}{1-z^2},\]
which satisfy the relation
\begin{equation}\label{logdiff}
\frac{zf'(z)}{f(z)}=\frac{zp_1'(z)}{p_1(z)}+\frac{zp_2'(z)}{p_2(z)}+\frac{1+z^2}{1-z^2}.
\end{equation}
Substituting $\alpha=0$ in (\ref{mainlemma}) and using (\ref{transformmap}) and (\ref{logdiff}), we get
\begin{equation}\label{discforK1}
\left|\frac{zf'(z)}{f(z)}-\frac{1+r^4}{1-r^4}\right| \leq
\frac{\splitfrac{(m+n) r+(10+2 m n) r^2+(9 m+9 n) r^3+(20+6 m n) r^4}{+(9 m+9 n) r^5+(10+2 m n) r^6+(m+n) r^7}}{(1 + m r +r^2) (1 + n r + r^2) (1 - r^4)}.
\end{equation}

Also, a straightforward calculation shows that
\begin{align}
\operatorname{Re}\left(\frac{zf'(z)}{f(z)}\right) & \geq \frac{1+r^4}{1-r^4}-\frac{\splitfrac{(m+n) r+(10+2 m n) r^2+(9 m+9 n) r^3+(20+6 m n) r^4}{+(9 m+9 n) r^5+(10+2 m n) r^6+(m+n) r^7}}{(1 + m r +r^2) (1 + n r + r^2) (1 - r^4)} \nonumber \\
&=\frac{\splitfrac{r^8 - (8 + m n) r^6 - 8 (m + n) r^5 - 6 (3 + m n) r^4 - 8 (m + n) r^3 }{-(8 + mn) r^2 + 1}}{(1 + m r + r^2) (1 + n r +
r^2) (1 - r^4)}. \label{K2-realpart}
\end{align}

\begin{enumerate}[(i)]
\item Set $x(r):=1-(m+n)r-3(6+ m n)r^2-17( m+ n)r^3-12(3+ m n)r^4-15( m+ n)r^5 -(14+m n)r^6+(m+n)r^7+3r^8$.
Note that $x(0)=1 > 0$ and $x(1)=-16(mn+2m+2n+4) <0$ and thus in view of the intermediate value theorem, a root of the equation (\ref{K1-S-P}) lies in the interval $(0,1)$, denoted by
$\rho_1$.

When $1/2 < a \leq 3/2$, in view of \cite[Lemma 2.2]{MR1415180}, the disc (\ref{discforK1}) is contained in the region $\{w \in \mathbb{C}: |w-1| < \operatorname{Re}w\}$ if
\[\frac{\splitfrac{(m+n) r+(10+2 m n) r^2+(9 m+9 n) r^3+(20+6 m n) r^4}{+(9 m+9 n) r^5+(10+2 m n) r^6+(m+n) r^7}}{(1 + m r +r^2) (1 + n r + r^2) (1 - r^4)} \leq \frac{1+r^4}{1-r^4}-\frac{1}{2}.\]
Hence,
\[\operatorname{Re}\left(\frac{zf'(z)}{f(z)}\right) > \left|\frac{zf'(z)}{f(z)}-1\right|\]
whenever $0<r\leq \rho_1$.
This proves that the $S^{*}_P$ radius for the class $\mathcal{K}^{2}_{b,c}$ is the number $\rho_1$.

To justify the sharpness of $S^{*}_P$ radius, observe that the function $f_{b,c}$ in (\ref{f1}) at $z=-i \rho_1$ satisfies
\begin{align*}
\operatorname{Re}\left(\frac{z(f_{b,c})'(z)}{f_{b,c}(z)}\right)
&=\frac{\splitfrac{\rho_1^8-(8-2 c (-4 b+2 c)) \rho_1^6+ 32 b \rho_1^5 -6 (3-2 c (-4 b+2 c)) \rho_1^4 +32 b \rho_1^3}{-(8-2 c (-4 b+2 c)) \rho_1^2+1}}{\left(1-2 c \rho_1+\rho_1^2\right) \left(1-(4 b-2 c) \rho_1+\rho_1^2\right) \left(1-\rho_1^4\right)}\\
&=\left|\frac{\splitfrac{4 b \rho_1-\left(10+16 b c+8 c^2\right) \rho_1^2+36  b \rho_1^3-\left(18+48 b c-24 c^2\right) \rho_1^4}{+28  b \rho_1^5-6 \rho_1^6-4 b \rho_1^7+2 \rho_1^8}}{(1-2 c \rho_1+\rho_1^2) (1-4 b \rho_1+2 c \rho_1+\rho_1^2) (1-\rho_1^4)}\right|\\
&=\left|\frac{z(f_{b,c})'(z)}{f_{b,c}(z)}-1\right|.
\end{align*}

\begin{remark}
On the similar lines, we can prove the sharpness for $b>0$. The sharpness is attained at the point
\[z = \left\{
\begin{array}{ll}
		-i \rho, & \text{if } b<0\\
		i \rho,  & \text{if } b>0\\
\end{array}
\right.\]
for the classes $S^{*}_{P}$, $S^{*}(\alpha)$, $S^{*}_{e}$, $S^{*}_{c}$, $S^{*}_{\leftmoon}$, $S^{*}_{R}$, $S^{*}_{Ne}$, $S^{*}_{SG}$ and for the classes $S^{*}_{L}$ and $S^{*}_{\sin}$, sharpness is attained at the point
	\[z = \left\{
	\begin{array}{ll}
		\rho, & \text{if } b<0\\
		- \rho, & \text{if } b>0.\\
	\end{array}
	\right.\]
\end{remark}

\begin{figure}[h!]
	\centering
	\begin{subfigure}[!]{0.25\linewidth}
		\includegraphics[width=\linewidth]{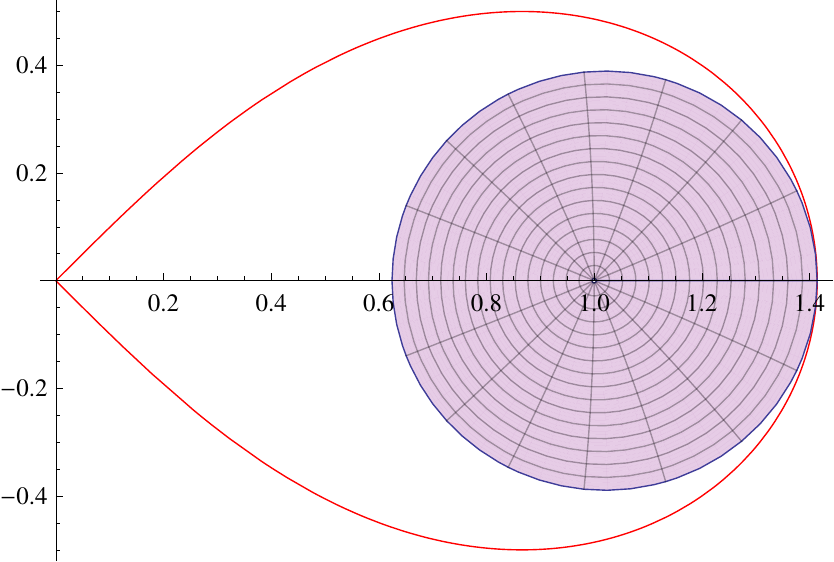}
		\caption{Sharpness of class $S^{*}_{L}$ with $\rho_3=0.116675$ at $b=-1,c=-1$}
	\end{subfigure}
	\hspace{2.6em}
	\begin{subfigure}[!]{0.23\linewidth}
		\includegraphics[width=\linewidth]{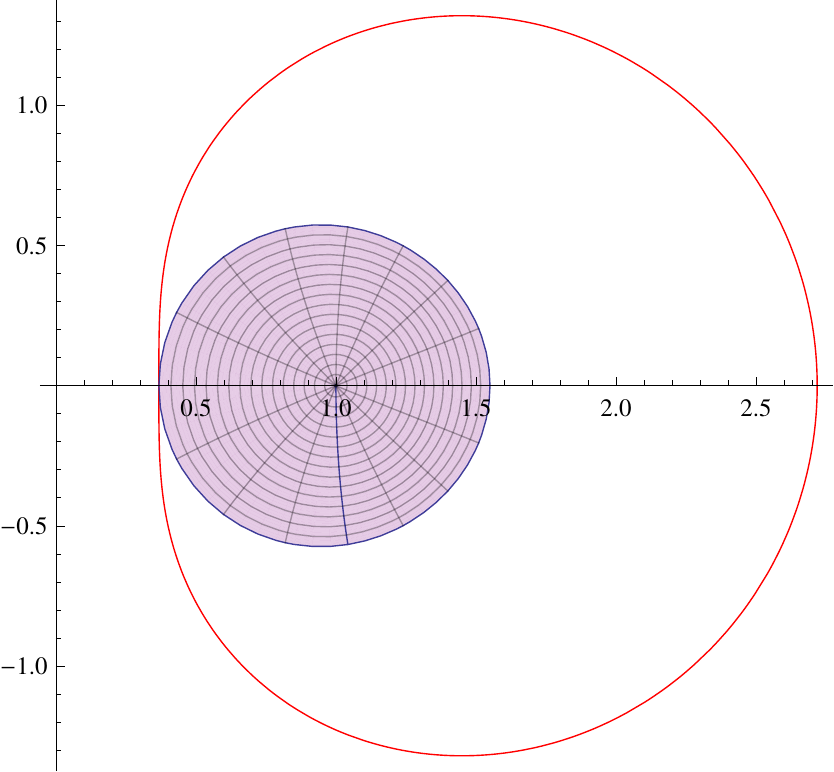}
		\caption{Sharpness of class $S_{e}^{*}$ with $\rho_4=0.144684$ at $b=-1,c=-1$}
	\end{subfigure}
	\hspace{2.6em}
	\begin{subfigure}[!]{0.22\linewidth}
		\includegraphics[width=\linewidth]{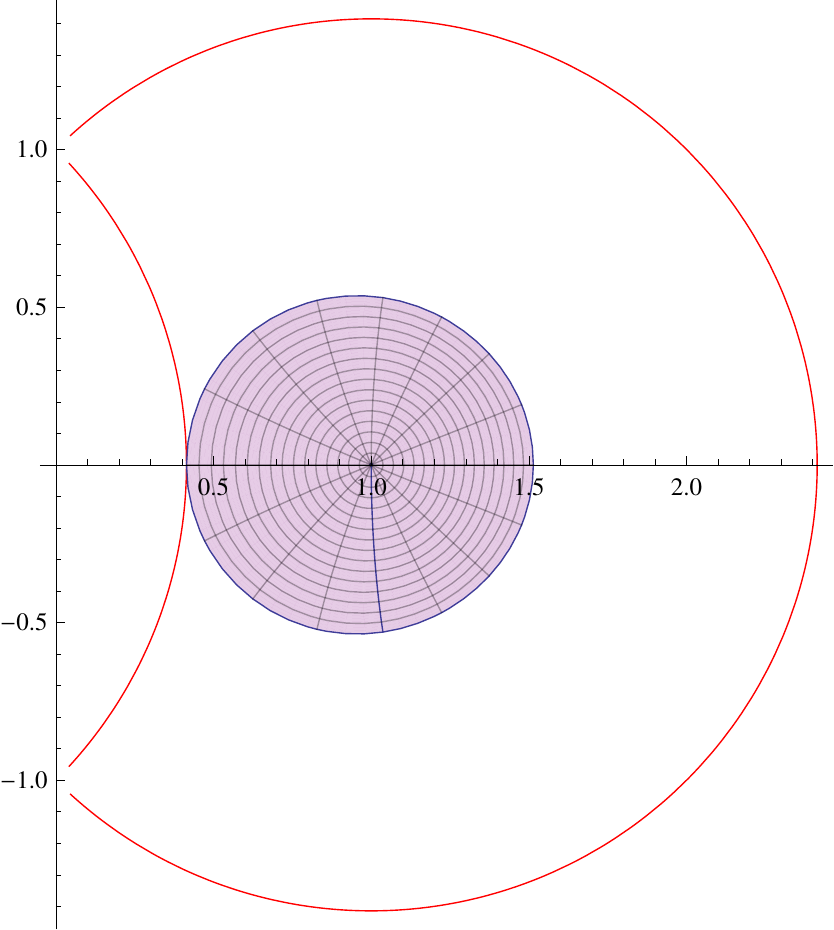}
		\caption{Sharpness of class $S^{*}_{\leftmoon}$ with $\rho_7=0.134993$ at  $b=-1,c=-1$}
	\end{subfigure}
	\caption{Graphical illustration of sharpness for classes in Theorem (\ref{theorem 2}) for particular choices of $b$ and $c$.}
	\label{fig:coffee}
\end{figure}

\item Let $f \in \mathcal{K}^{2}_{b,c}$ and $\alpha \in [0,1)$.
Let $\rho_2$ denote the smallest positive real root of the equation (\ref{K1-S-alpha}) in $(0,1)$.
From (\ref{K2-realpart}), it follows that for $0<r \leq \rho_2$, the function $f$ satisfies
\begin{align*}
\operatorname{Re}\left(\frac{zf'(z)}{f(z)}\right) &=\frac{\splitfrac{r^8 - (8 + m n) r^6 - 8 (m + n) r^5 - 6 (3 + m n) r^4 - 8 (m + n) r^3 }{-(8 + mn) r^2 + 1}}{(1 + m r + r^2) (1 + n r +
r^2) (1 - r^4)}\\
&> \alpha,
\end{align*}
thereby showing that $f \in S^{*}(\alpha)$ in each disc for $0<r \leq \rho_2$.
For $z=-i \rho_2$, the function $f_{b,c}$ defined in (\ref{f1}) satisfies
\begin{align*}
\frac{z(f_{b,c})'(z)}{f_{b,c}(z)}
&=\frac{\splitfrac{\rho_2^8-(8-2 c (-4 b+2 c)) \rho_2^6+ 32b \rho_2^5 -6 (3-2 c (-4 b+2 c)) \rho_2^4 + 32 b \rho_2^3}{-(8-2 c (-4 b+2 c)) \rho_2^2+1}}{\left(1-2 c \rho_2+\rho_2^2\right) \left(1-(4b-2 c) \rho_2+\rho_2^2\right) \left(1-\rho_2^4\right)}\\
&=\alpha
\end{align*}
proving that the radius is sharp.

\item The number $\rho_3$ is the smallest real root of the equation in (\ref{K1-S-L}).
From (\ref{discforK1}), it follows that
\[\left|\frac{zf'(z)}{f(z)}-1\right| \leq \frac{(m+n) r+2(5+m n) r^2+8(m+n) r^3+4(3+m n) r^4+3(m+n) r^5+2 r^6}{(1 + m r +
r^2) (1 + n r + r^2)(1 - r^2)}.\]
When $2\sqrt{2}/3 \leq a < \sqrt{2}$, by \cite[Lemma 2.2]{MR2879136}, the disc (\ref{discforK1}) is contained in the lemniscate region $\{w \in \mathbb{C}: |w^2-1| < 1\}$ if
\[\frac{(m+n) r+2(5+m n) r^2+8(m+n) r^3+4(3+m n) r^4+3(m+n) r^5+2 r^6}{(1 + m r +r^2) (1 + n r + r^2)(1 - r^2)} \leq \sqrt{2}-1.\]
Hence,
\[\left|\left(\frac{zf'(z)}{f(z)}\right)^2-1\right| < 1\]
for $0 < r \leq \rho_3$ showing that the number $\rho_3$ is the $S^{*}_{L}$ radius for the class $\mathcal{K}^{2}_{b,c}$.
Consider the functions $f_{b,c}$, $g_{b,c}: \mathbb{D} \to \mathbb{C}$ defined by
\begin{equation}\label{K1-SL-sharp}
f_{b,c}=\frac{z(1-2cz+z^2)(1-(4 b-2 c)z+z^2)}{(1-z^2)^3} \quad \text{and} \quad
g_{b,c}=\frac{z \left(1-2 c z+z^2\right)}{\left(1-z^2\right)^2}.
\end{equation}
The function $f_{b,c}$ for the chosen $g_{b,c}$ as given in (\ref{K1-SL-sharp}) is a member of $\mathcal{K}^{2}_{b,c}$ because it satisfy
\[\frac{f_{b,c}(z)}{g_{b,c}(z)}=\frac{1-s_3(z)}{1+s_3(z)} \quad \text{and} \quad
\frac{g_{b,c}(z)(1-z^2)}{z}=\frac{1-s_4(z)}{1+s_4(z)},\]
where
\[s_3(z)=\frac{z(z-(2b-c))}{(2b-c)z-1} \quad \text{and} \quad s_4(z)=\frac{z (z-c)}{cz-1}\]
are analytic functions satisfying the conditions of Schwarz's lemma in unit disc and hence
$\operatorname{Re}(f_{b,c}(z)/g_{b,c}(z)) > 0$ and $\operatorname{Re}(g_{b,c}(z)(1-z^2)/z)>0$.
Then for $f_{b,c}$ in (\ref{K1-SL-sharp}) at $z=\rho_{3}$, we have,
\begin{align*}
\left|\left(\frac{z(f_{b,c})'(z)}{f_{b,c}(z)}\right)^2-1\right|
&=\left|\left(\frac{\splitfrac{1-8 b \rho_{3}+11 \rho_{3}^2+24 b c \rho_{3}^2-12 c^2 \rho_{3}^2-32 b \rho_{3}^3+11 \rho_{3}^4}{+24 b c \rho_{3}^4-12 c^2 \rho_{3}^4-8 b \rho_{3}^5+\rho_{3}^6}}{(1-2 c \rho_{3}+\rho_{3}^2) (1-(4 b -2 c)\rho_{3}+\rho_{3}^2)(1-\rho_{3}^2)}\right)^2-1\right|\\
&=|(\sqrt{2})^2-1|
=1
\end{align*}
and therefore, the estimate is sharp.

\item  The number $\rho_4$ is the smallest real root of the equation in (\ref{K1-S-e}).
When $1/e < a \leq (e+1/e)/2$, in view of \cite[Lemma 2.2]{MR3394060}, the disc (\ref{discforK1}) is contained in the region $\{w \in \mathbb{C}: |\log w| < 1\}$ if
\[\frac{\splitfrac{(m+n) r+(10+2 m n) r^2+(9 m+9 n) r^3+(20+6 m n) r^4}{+(9 m+9 n) r^5+(10+2 m n) r^6+(m+n) r^7}}{(1 + m r +r^2) (1 + n r + r^2) (1 - r^4)} \leq \frac{1+r^4}{1-r^4}-\frac{1}{e}\]
for $0<r\leq \rho_4$.
This shows that the $S^{*}_{e}$ radius for the class $\mathcal{K}^{2}_{b,c}$ is the number $\rho_4$.
Moreover, for $f_{b,c}$ in (\ref{f1}) at $z=-i \rho_4$, we have,
\begin{align*}
\left|\log\left(\frac{z(f_{b,c})'(z)}{f_{b,c}(z)}\right)\right|
&= \left|\log\left(\frac{\splitfrac{\rho_4^8-(8-2 c (-4 b+2 c)) \rho_4^6+ 32b \rho_4^5 -6 (3-2 c (-4 b}{+2 c)) \rho_4^4 +32 b \rho_4^3-(8-2 c (-4 b+2 c)) \rho_4^2+1}}{\left(1-2 c \rho_4+\rho_4^2\right) \left(1-(4 b-2 c) \rho_4+\rho_4^2\right) \left(1-\rho_4^4\right)}\right)\right|\\
&=\left|\log\left(\frac{1}{e}\right)\right|
=1.
\end{align*}
This shows that the radius estimate is sharp.

\item The number $\rho_5$ is the smallest real root of the equation in (\ref{K1-S-c}). In view of \cite[Lemma 2.5]{MR3536076}, the disc (\ref{discforK1}) is contained in the region $\phi_{c}(\mathbb{D})$, where $\phi_{c}(z)=1+(4/3)z+(2/3)z^2$ if
\[\frac{\splitfrac{(m+n) r+(10+2 m n) r^2+(9 m+9 n) r^3+(20+6 m n) r^4}{+(9 m+9 n) r^5+(10+2 m n) r^6+(m+n) r^7}}{(1 + m r +r^2) (1 + n r + r^2) (1 - r^4)} \leq \frac{1+r^4}{1-r^4}-\frac{1}{3}\]
whenever $1/3 < a \leq 5/3$.
The result is sharp for the function $f_{b,c}$ given in (\ref{f1}) and at $z=-i \rho_5$, we have,
\begin{align*}
\left|\frac{z(f_{b,c})'(z)}{f_{b,c}(z)}\right|
&=\left|\frac{\splitfrac{\rho_5^8-(8-2 c (-4 b+2 c)) \rho_5^6+ 32b \rho_5^5 -6 (3-2 c (-4 b + 2 c)) \rho_5^4}{+32 b \rho_5^3-(8-2 c (-4 b+2 c)) \rho_5^2+1}}{\left(1-2 c \rho_5+\rho_5^2\right) \left(1-(4 b-2 c) \rho_5+\rho_5^2\right) \left(1-\rho_5^4\right)}\right|\\
&=\frac{1}{3} = \phi_c(-1).
\end{align*}

\item The number $\rho_6$ is the smallest real root of the equation in (\ref{K1-S-sin}).
When $1-\sin 1 < a \leq 1+\sin 1$, an application of \cite[Lemma 3.3]{MR3913990} yields that the function $f \in S^{*}_{\sin}$ if
\[\frac{\splitfrac{(m+n) r+(10+2 m n) r^2+(9 m+9 n) r^3+(20+6 m n) r^4}{+(9 m+9 n) r^5+(10+2 m n) r^6+(m+n) r^7}}{(1 + m r +r^2) (1 + n r + r^2) (1 - r^4)} \leq \sin 1 -\frac{2r^4}{1-r^4}.\]
For $0 < r \leq \rho_6$, the disc (\ref{discforK1}) is contained in the region $\phi_{s}(\mathbb{D})$, where $\phi_{s}(z)=1+\sin z$, showing that the radius of sine starlikeness for the class $\mathcal{K}^{2}_{b,c}$ is the number $\rho_6$.
To prove sharpness, observe that for the functions given in (\ref{K1-SL-sharp}), at $z=\rho_6$, we have,
\begin{align*}
\frac{z(f_{b,c})'(z)}{f_{b,c}(z)}
&=\frac{\splitfrac{\rho_6^8-(8-2 c (-4 b+2 c)) \rho_6^6+ 32b \rho_6^5 -6 (3-2 c (-4 b + 2 c)) \rho_6^4}{+32 b \rho_6^3-(8-2 c (-4 b+2 c)) \rho_6^2+1}}{\left(1-2 c \rho_6+\rho_6^2\right) \left(1-(4 b-2 c) \rho_6+\rho_6^2\right) \left(1-\rho_6^4\right)}\\
&=1+\sin 1 = \phi_s(1) \in \partial\phi_s(\mathbb{D}).
\end{align*}
Hence, the result is sharp.

\vskip 0.3cm

\item The number $\rho_7$ is the smallest real root of the equation in (\ref{K1-S-lune}).
Consider $\sqrt{2}-1 < a < \sqrt{2}+1$ and applying \cite[Lemma 2.1]{MR3718233}, the disc (\ref{discforK1}) is contained in the region $\{w \in \mathbb{C}: 2|w| > |w^2-1|\}$ if
\[\frac{\splitfrac{(m+n) r+(10+2 m n) r^2+(9 m+9 n) r^3+(20+6 m n) r^4}{+(9 m+9 n) r^5+(10+2 m n) r^6+(m+n) r^7}}{(1 + m r +r^2) (1 + n r + r^2) (1 - r^4)} \leq \frac{1+r^4}{1-r^4}+1-\sqrt{2}.\]
Therefore, for $0<r \leq \rho_7$, we have
\[2\left|\frac{zf'(z)}{f(z)}\right| > \left|\left(\frac{zf'(z)}{f(z)}\right)^2-1\right|\]
which concludes that the number $\rho_7$ is the $S^{*}_{\leftmoon}$ radius for the class $\mathcal{K}^{2}_{b,c}$.
Further, for the function in (\ref{f1}) at $z=-i \rho_7$, we have,
\begin{align*}
\left|\left(\frac{z(f_{b,c})'(z)}{f_{b,c}(z)}\right)^2-1\right|
&=\left|\left(\frac{\splitfrac{\rho_7^8-(8-2 c (-4 b+2 c)) \rho_7^6+ 32b \rho_7^5 -6 (3-2 c (-4 b}{+2 c)) \rho_7^4 + 32 b \rho_7^3-(8-2 c (-4 b+2 c)) \rho_7^2+1}}{\left(1-2 c \rho_7+\rho_7^2\right) \left(1-(4b-2 c) \rho_7+\rho_7^2\right) \left(1-\rho_7^4\right)}\right)^2-1\right|\\
&=2\left|\frac{\splitfrac{\rho_7^8-(8-2 c (-4 b+2 c)) \rho_7^6+ 32b \rho_7^5 -6 (3-2 c (-4 b+2 c)) \rho_7^4}{+ 32 b \rho_7^3-(8-2 c (-4 b+2 c)) \rho_7^2+1}}{\left(1-2 c \rho_7+\rho_7^2\right) \left(1-(4b-2 c) \rho_7+\rho_7^2\right)\left(1-\rho_7^4\right)}\right|\\
&=2\left|\frac{z(f_{b,c})'(z)}{f_{b,c}(z)}\right|.
\end{align*}
Thus, the estimate is best possible.

\vskip 0.3cm

\item The number $\rho_8$ is the smallest real root of the equation in (\ref{K1-S-R}).
If $2(\sqrt{2}-1) < a \leq \sqrt{2}$, by \cite[Lemma 2.2]{MR3496681} the function $f \in S^{*}_{R}$ if
\[\frac{\splitfrac{(m+n) r+(10+2 m n) r^2+(9 m+9 n) r^3+(20+6 m n) r^4}{+(9 m+9 n) r^5+(10+2 m n) r^6+(m+n) r^7}}{(1 + m r +r^2) (1 + n r + r^2) (1 - r^4)} \leq \frac{1+r^4}{1-r^4} - 2(\sqrt{2}-1).\]
Observe that for $0 < r \leq \rho_8$, the disc (\ref{discforK1}) is contained in the region $\phi_{0}(\mathbb{D})$, where $\phi_{0}(z):=1+(z/k)((k+z)/(k-z)), k=1+\sqrt{2}$.
For sharpness, note that the function $f_{b,c}$ in (\ref{f1}) at $z=-i \rho_8$ satisfies
\begin{align*}
\frac{z(f_{b,c})'(z)}{f_{b,c}(z)}
&=\frac{\splitfrac{\rho_8^8-(8-2 c (-4 b+2 c)) \rho_8^6+ 32b \rho_8^5 -6 (3-2 c (-4 b+2 c)) \rho_8^4+ 32 b \rho_8^3}{-(8-2 c (-4 b+2 c)) \rho_8^2+1}}{\left(1-2 c \rho_8+\rho_8^2\right) \left(1-(4b-2 c) \rho_8+\rho_8^2\right)\left(1-\rho_8^4\right)}\\
&=2\sqrt{2}-2 = \phi_0(-1).
\end{align*}


\item Let $\rho_{9}$ denote the smallest real root of the equation (\ref{K1-S-Ne}).
For $1 \leq a < 5/3$, an application of \cite[Lemma 2.2]{MR4190740} gives that $f \in S^{*}_{Ne}$ if
\[\frac{\splitfrac{(m+n) r+(10+2 m n) r^2+(9 m+9 n) r^3+(20+6 m n) r^4}{+(9 m+9 n) r^5+(10+2 m n) r^6+(m+n) r^7}}{(1 + m r +r^2) (1 + n r + r^2) (1 - r^4)} \leq \frac{5}{3}-\frac{1+r^4}{1-r^4}.\]
Thus, for $0 < r \leq \rho_{9}$, the disc (\ref{discforK1}) lies in the region $\phi_{Ne}(\mathbb{D})$, where $\phi_{Ne}(z)=1+z-z^3/3$.
Further, the function defined in (\ref{f1}) at $z=-i \rho_{9}$ satisfies
\begin{align*}
\left|\frac{z(f_{b,c})'(z)}{f_{b,c}(z)}\right|
&=\left|\frac{\splitfrac{\rho_{9}^8-(8-2 c (-4 b+2 c)) \rho_{9}^6+ 32b \rho_{9}^5 -6 (3-2 c (-4 b+2 c)) \rho_{9}^4}{+ 32 b \rho_{9}^3-(8-2 c (-4 b+2 c)) \rho_{9}^2+1}}{\left(1-2 c \rho_{9}+\rho_{9}^2\right) \left(1-(4b-2 c)\rho_{9}+\rho_{9}^2\right)\left(1-\rho_{9}^4\right)}\right|\\
&=\frac{5}{3} = \phi_{Ne}(1).
\end{align*}
This proves that the estimate is best possible.

\item Let $f \in \mathcal{K}^{2}_{b,c}$ and $\rho_{10}$ denote the smallest real root of the equation (\ref{K1-S-SG}).
Using \cite[Lemma 2.2]{MR4044913} for $1 \leq a < 2e/(1+e)$, it follows that $f \in S^{*}_{SG}$ if
\[\frac{\splitfrac{(m+n) r+(10+2 m n) r^2+(9 m+9 n) r^3+(20+6 m n) r^4}{+(9 m+9 n) r^5+(10+2 m n) r^6+(m+n) r^7}}{(1 + m r +r^2) (1 + n r + r^2) (1 - r^4)} \leq \frac{2e}{1+e}-\frac{1+r^4}{1-r^4}.\]
Hence, for $0 < r \leq \rho_{10}$, the disc (\ref{discforK1}) lies in the region $\phi_{SG}(\mathbb{D})$, where $\phi_{SG}(z)=2/(1+e^{-z})$.
This shows that the $S^{*}_{SG}$ radius for the class $\mathcal{K}^{2}_{b,c}$ is the number $\rho_{10}$.
For $w=z(f_{b,c})'/f_{b,c}$ and at $z=-i \rho_{10}$, where $f_{b,c}$ is the function defined in (\ref{f1}), we have
\begin{align*}
\left|\log\left(\frac{w}{2-w}\right)\right|
&=\left|\frac{\splitfrac{-1+(8+8 b c-4 c^2) \rho_{10}^2+32 b \rho_{10}^3+6 (3+8 b c-4 c^2) \rho_{10}^4}{+32 b \rho_{10}^5+(8+8 b c-4 c^2) \rho_{10}^6-\rho_{10}^8}}{\splitfrac{-1+8 b \rho_{10} -(12 +24 b c -12 c^2) \rho_{10} ^2+40 b \rho_{10} ^3-(18+48 b c}{-24 c^2 )\rho_{10} ^4+24 b \rho_{10} ^5-(4 -8 b c +4 c^2) \rho_{10} ^6-8 b \rho_{10} ^7+3 \rho_{10} ^8}}\right|\\
&=1.
\end{align*}
\end{enumerate}
Thus, the obtained radius is sharp.
\end{proof}

In the theorem given below, we determine the radii constants for the class $\mathcal{K}^{3}_{b,c}$.

\begin{theorem}\label{theorem 3}
Let $u=|2c-3b| \leq 1$ and $v=|2c|$. Then, the following results hold for the class $\mathcal{K}^{3}_{b,c}$:
\begin{enumerate}[(i)]
\item The $S^{*}_{P}$ radius is the smallest positive real root of the equation
\begin{align}
&1-(u+v)r-(15+3 u v)r^2-(17 u+12 v)r^3-(17+12 u v)r^4-(15 u+3 v)r^5 \nonumber \\
&-(1+u v)r^6+ur^7=0. \label{K2-S-P}
\end{align}

\item For any $0 \leq \alpha <1$, the $S^{*}(\alpha)$ radius is the smallest positive real root of the equation
\begin{align}
&1-\alpha-(u \alpha+v \alpha)r-(7+u v+\alpha+u v \alpha )r^2-(8 u+6 v+u \alpha)r^3-(9+6 u v-\alpha )r^4 \nonumber \\
&-(8 u+2 v-u \alpha -v \alpha)r^5 -(1+u v-\alpha -u v \alpha )r^6+ \alpha u r^7=0. \label{K2-S-alpha}
\end{align}

\item The $S^{*}_{L}$ radius is the smallest positive real root of the equation
\begin{align}
&1-\sqrt{2}+(2 u-\sqrt{2} u+2 v-\sqrt{2} v)r +
(8+3 u v-\sqrt{2} u v)r^2+
(8 u+4 v+\sqrt{2} v)r^3 \nonumber \\
&+(3+\sqrt{2}+3 u v+\sqrt{2} u v)r^4+
(2 u+\sqrt{2} u)r^5=0. \label{K2-S-L}
\end{align}

\item The $S^{*}_{e}$ radius is the smallest positive real root of the equation
\begin{align}
&1-e+(u+v)r+(1+7 e+u v+e u v)r^2 +(u+8 e u+6 e v)r^3 -(1-9 e-6 e u v)r^4 \nonumber \\
&-(u-8 e u+v-2 e v)r^5-(1-e+u v-e u v)r^6 -u r^7=0. \label{K2-S-e}
\end{align}

\item The $S^{*}_{c}$ radius is the smallest positive real root of the equation
\begin{align}
&2-(u+v)r-(22+4 u v)r^2-(25 u+18 v)r^3 -(26+18 u v)r^4 -(23 u+5 v)r^5 \nonumber \\
&-(2+2 u v)r^6+u r^7=0. \label{K2-S-c}
\end{align}

\item The $S^{*}_{\sin}$ radius is the smallest positive real root of the equation
\begin{align}
&\sin 1-(u+v-u \sin 1-v \sin 1)r-(8+2 u v-\sin 1-u v \sin 1)r^2-(9 u+6 v-u \sin 1)r^3 \nonumber \\
&-(12+6 u v+\sin 1)r^4 -(11 u+5 v+u \sin 1+v \sin 1)r^5-(4+4 u v+\sin 1+u v \sin 1)r^6 \nonumber \\
&-(3 u+u \sin 1)r^7=0. \label{K2-S-sin}
\end{align}

\item The $S^{*}_{\leftmoon}$ radius is the smallest positive real root of the equation
\begin{align}
&2-\sqrt{2}+(u-\sqrt{2} u+v-\sqrt{2} v)r-(6+\sqrt{2}+\sqrt{2} u v)r^2-(7 u+\sqrt{2} u+6 v)r^3 -(10-\sqrt{2} \nonumber \\
&+6 u v)r^4 -(9 u-\sqrt{2} u+3 v-\sqrt{2} v)r^5-(2-\sqrt{2}+2 u v-\sqrt{2} u v)r^6-(u-\sqrt{2} u)r^7=0. \label{K2-S-lune}
\end{align}

\item The $S^{*}_{R}$ radius is the smallest positive real root of the equation
\begin{align}
&3-2 \sqrt{2}+(2 u-2 \sqrt{2} u+2 v-2 \sqrt{2} v)r-(5+2 \sqrt{2}-u v+2 \sqrt{2} u v)r^2-(6 u+2 \sqrt{2} u+6 v)r^3 \nonumber \\
&-(11-2 \sqrt{2}+6 u v)r^4-(10 u-2 \sqrt{2} u+4 v-2 \sqrt{2} v)r^5-(3-2 \sqrt{2}+3 u v-2 \sqrt{2} u v)r^6 \nonumber \\
&-(2 u-2 \sqrt{2} u)r^7=0. \label{K2-S-R}
\end{align}


\item The $S^{*}_{N_{e}}$ radius is the smallest positive real root of the equation
\begin{align}
&2-(u+v)r-(22+4 u v)r^2-(25 u+18 v)r^3-(38+18 u v)r^4-(35 u+17 v)r^5 \nonumber \\
&-(14+14 u v)r^6-11u r^7=0. \label{K2-S-Ne}
\end{align}

\item The $S^{*}_{SG}$ radius is the smallest positive real root of the equation
\begin{align}
&1-e+(2 u+2 v)r+(9+7 e+3 u v+e u v)r^2+(10 u+8 e u+6 v+6 e v)r^3+(11+13 e+6 u v \nonumber \\
&+6 e u v)r^4 +(10 u+12 e u+4 v+6 e v)r^5+(3+5 e+3 u v+5 e u v)r^6+(2 u+4 e u)r^7 =0.  \label{K2-S-SG}
\end{align}
\end{enumerate}
\end{theorem}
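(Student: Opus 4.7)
The plan is to adapt the argument of Theorem \ref{theorem 2} to the defining condition $|f/g - 1| < 1$. The key observation is that this condition is equivalent to $\operatorname{Re}(g/f) > 1/2$, so setting $P(z) = g(z)/f(z)$ and $p_2(z) = g(z)(1-z^2)/z$, one has $P \in \mathcal{P}(1/2)$ with $P(z) = 1 + (2c-3b)z + \cdots$ and $p_2 \in \mathcal{P}(0)$ with $p_2(z) = 1 + 2cz + \cdots$. The coefficient magnitudes satisfy $|2c - 3b| = u \leq 1$ and $|2c| = v \leq 2$, which is exactly what the main lemma (\ref{mainlemma}) requires. From $f(z) = zp_2(z)/[(1-z^2) P(z)]$, logarithmic differentiation gives
\[\frac{zf'(z)}{f(z)} = \frac{zp_2'(z)}{p_2(z)} - \frac{zP'(z)}{P(z)} + \frac{1+z^2}{1-z^2}.\]

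Applying (\ref{mainlemma}) to $p_2$ (with $\alpha=0$ and ``$|b|$'' $= v/2$) and to $P$ (with $\alpha=1/2$ and ``$|b|$'' $= u$), and combining with (\ref{transformmap}) via the triangle inequality, yields the disk containment
\[\left|\frac{zf'(z)}{f(z)} - \frac{1+r^4}{1-r^4}\right| \leq \frac{2r^2}{1-r^4} + \frac{r(vr^2 + 4r + v)}{(1-r^2)(r^2 + vr + 1)} + \frac{r(ur^2 + 2r + u)}{(1-r^2)(ur + 1)}\]
for $|z| = r$; this is the analogue of (\ref{discforK3}) and (\ref{discforK1}). From here, each part (i)--(x) follows the established template: for each Ma--Minda subclass, invoke the corresponding disk-inclusion lemma already cited in the proofs of Theorems \ref{Theorem 1} and \ref{theorem 2} (for instance, \cite[Lemma 2.2]{MR1415180} for $S^{*}_P$, \cite[Lemma 2.2]{MR2879136} for $S^{*}_L$, \cite[Lemma 2.2]{MR3394060} for $S^{*}_e$, and so on) to transform the requirement ``disk $\subset \phi(\mathbb{D})$'' into an upper bound on the right-hand side above. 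Clearing the full denominator $2(1-r^4)(r^2 + vr + 1)(ur + 1)$ converts each resulting inequality into the polynomial condition in (\ref{K2-S-P})--(\ref{K2-S-SG}), and the smallest positive root in $(0,1)$ (which exists by the intermediate value theorem, since each polynomial is positive at $r=0$ and negative as $r \to 1^-$) is the claimed radius.

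Sharpness is exhibited by extremal pairs $(f_{b,c}, g_{b,c})$ modelled on (\ref{f1}) and (\ref{K1-SL-sharp}). One takes $g_{b,c}(z) = z(1+z^2)/[(1-z^2)(1 - 2ciz - z^2)]$, the same extremal used in Theorem \ref{theorem 2}, and sets $f_{b,c}(z) = g_{b,c}(z)(1 + s(z))$ for a Schwarz function $s$ with $s(0) = 0$ saturating the constraint $|3b-2c| = u$; the explicit form of $s$ is analogous to $s_2$ from Section 2, rotated so as to hit the boundary $|w-1| = 1$ of the disk rather than the boundary of the half-plane. Evaluating $zf_{b,c}'(z)/f_{b,c}(z)$ at $z = \pm i \rho$ (for $S^{*}_P$, $S^{*}(\alpha)$, $S^{*}_e$, $S^{*}_c$, $S^{*}_\leftmoon$, $S^{*}_R$, $S^{*}_{Ne}$, $S^{*}_{SG}$) or at $z = \pm \rho$ (for $S^{*}_L$, $S^{*}_{\sin}$), with the sign selected according to the sign of $b$, then places the value on $\partial \phi(\mathbb{D})$, confirming sharpness.

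The main obstacle is the lengthy but mechanical algebra required to expand $2(1-r^4)(r^2 + vr + 1)(ur + 1)$ times the right-hand side of the disk bound as a polynomial in $r$ and to match coefficients with those displayed in (\ref{K2-S-P})--(\ref{K2-S-SG}) across all ten subordinations. A secondary technicality is that each cited inclusion lemma carries an ancillary hypothesis on the center $a = (1+r^4)/(1-r^4)$ of the disk (for example $a \leq 3/2$ for $S^{*}_P$ and $a < \sqrt{2}$ for $S^{*}_L$); these conditions are mild, hold automatically once $r$ is below the claimed radius, but should be recorded in each case for completeness.
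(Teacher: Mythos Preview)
Your proposal is correct and follows essentially the same route as the paper: the same auxiliary functions $g/f\in\mathcal{P}(1/2)$ and $g(1-z^2)/z\in\mathcal{P}(0)$, the same logarithmic-differentiation identity, the same application of McCarty's estimate (\ref{mainlemma}) with parameters $(\alpha,|b|)=(1/2,u)$ and $(0,v/2)$, and then the same case-by-case use of the disk-inclusion lemmas. The only cosmetic difference is that you leave the radius bound as a sum of three terms, whereas the paper combines them into the single fraction (\ref{discforK2}); also, you sketch a sharpness argument, which the paper neither claims nor proves for Theorem~\ref{theorem 3} (note that, unlike Theorems~\ref{Theorem 1} and~\ref{theorem 2}, this statement carries no ``all estimates are sharp'' clause). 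Two small slips to tidy: the common denominator is $(1-r^4)(r^2+vr+1)(ur+1)$, with no extra factor of $2$; and for some parts (e.g.\ $S^{*}_{L}$, $S^{*}_{SG}$) the polynomial is negative at $r=0$ and positive at $r=1$, so your IVT sentence should allow either sign pattern.
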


\begin{proof}
Let	$f \in \mathcal{K}^{3}_{b,c}$. Further, choose the function $g \in \mathcal{A}$ such that
\[\left|\frac{f(z)}{g(z)}-1\right| < 1 \text{ and } \operatorname{Re}\frac{g(z)(1-z^2)}{z}>0.\]
Define the functions $p_1, p_2:\mathbb{D} \to \mathbb{C}$ as
$p_1(z)=g(z)(1-z^2)/z$
and
$p_2(z)=g(z)/f(z)$.
Clearly, $p_1 \in \mathcal{P}_{c}$ and $p_2 \in \mathcal{P}_{2c-3b}(1/2)$.
Moreover,
\[f(z)=\frac{g(z)}{p_2(z)}=\frac{zp_1(z)}{(1-z^2)p_2(z)}\]
and
\begin{equation}\label{equationforK2}
\frac{zf'(z)}{f(z)}=\frac{zp_1'(z)}{p_1(z)}-\frac{zp_2'(z)}{p_2(z)}+\frac{1+z^2}{1-z^2}.
\end{equation}
From (\ref{mainlemma}), the following inequalities readily follows:
\begin{equation}\label{lemmainequalityforK2}
\left|\frac{zp_1'(z)}{p_1(z)}\right| \leq \frac{r}{1-r^2}\frac{vr^2+v+4r}{r^2+vr+1}
\quad \text{and} \quad
\left|\frac{zp_2'(z)}{p_2(z)}\right| \leq \frac{r}{1-r^2}\frac{ur^2+u+2r}{ur+1}.
\end{equation}
Thus, combining (\ref{equationforK2}) and (\ref{lemmainequalityforK2}) yields the disk
\begin{equation}\label{discforK2}
\left|\frac{zf'(z)}{f(z)}-\frac{1+r^4}{1-r^4}\right| \leq
\frac{\splitfrac{(u+v)r+(8+2 u v)r^2+(9 u+6 v)r^3+(10+6 u v)r^4+(9 u+3 v)r^5}{+(2+2 u v)r^6+ur^7}}{ (1 + r u) (1 + r^2 + r v)(1 -r^4)}.
\end{equation}
From above inequality, it follows that
\begin{equation}\label{Realpart-K2}
\operatorname{Re}\left(\frac{zf'(z)}{f(z)}\right)  \geq
\frac{1-(7 + u v)r^2 -(8u+6v)r^3-
(9 + 6 u v)r^4-2(4u+v)r^5-(1+uv)r^6}{(1 - r^4) (1 + r u) (1 + r^2 + r v)}.
\end{equation}

\begin{enumerate}[(i)]
\item Set $x(r):=1-(u+v)r-(15+3 u v)r^2-(17 u+12 v)r^3-(17+12 u v)r^4-(15 u+3 v)r^5-(1+u v)r^6+ur^7$. Observe that $x(0)=1>0$ and $x(1)=-16 (2 + 2 u + uv + v) <0$ and thus the intermediate value theorem shows that a root of the equation (\ref{K2-S-P}) lies in $(0,1)$, denoted by $\rho_1$.

When $1/2 < a \leq 3/2$, in view of \cite[Lemma 2.2]{MR1415180}, the disc (\ref{discforK2}) is contained in the parabolic region $\{w \in \mathbb{C}: |w-1| < \operatorname{Re}w\}$ if
\[\frac{\splitfrac{(u+v)r+(8+2 u v)r^2+(9 u+6 v)r^3+(10+6 u v)r^4+(9 u+3 v)r^5}{+(2+2 u v)r^6+ur^7}}{ (1 + r u) (1 + r^2 + r v)(1 -r^4)} \leq \frac{1+r^4}{1-r^4}-\frac{1}{2}.\]
Equivalently,
\[\operatorname{Re}\left(\frac{zf'(z)}{f(z)}\right) > \left|\frac{zf'(z)}{f(z)}-1\right|\]
for $0<r\leq \rho_1$.
Thus, the radius of parabolic starlikeness for the class $\mathcal{K}^{3}_{b,c}$ is the number $\rho_1$.

\item For $0 \leq \alpha <1$, let $\rho_2 \in (0,1)$ be the smallest positive real root of the equation (\ref{K2-S-alpha}).
Thus, in view of (\ref{Realpart-K2}), we get
\[\operatorname{Re}\left(\frac{zf'(z)}{f(z)}\right) > \alpha\]
whenever $0<r \leq \rho_{2}$.


\item The number $\rho_3$ is the smallest real root of the equation in (\ref{K2-S-L}).
When $2\sqrt{2}/3 \leq a < \sqrt{2}$, in view of \cite[Lemma 2.2]{MR2879136}, the disc (\ref{discforK2}) lies in the lemniscate region $\{w \in \mathbb{C}: |w^2-1| < 1\}$ if
\[\frac{(u+v)r+(8+2uv)r^2 +(8u+5v)r^3+(4+4 u v)r^4+3ur^5}{(1 + r u) (1 + r^2 + r v)(1 - r^2)} \leq \sqrt{2}-1.\]
Hence, for $0<r\leq \rho_3$, we have
\[\left|\left(\frac{zf'(z)}{f(z)}\right)^2-1\right|<1.\]
the radius of lemniscate starlikeness for the class $\mathcal{K}^{3}_{b,c}$ is the number $\rho_3$.

\item The number $\rho_4$ is the smallest real root of the equation in (\ref{K2-S-e}).
When $1/e < a \leq (e+1/e)/2$, in view of \cite[Lemma 2.2]{MR3394060}, $f \in S^{*}_{e}$ if
\[\frac{\splitfrac{(u+v)r+(8+2 u v)r^2+(9 u+6 v)r^3+(10+6 u v)r^4+(9 u+3 v)r^5}{+(2+2 u v)r^6+ur^7}}{ (1 + r u) (1 + r^2 + r v)(1 -r^4)} \leq \frac{1+r^4}{1-r^4}-\frac{1}{e}.\]
Thus, the disc (\ref{discforK2}) is contained in the region $\{w \in \mathbb{C}: |\log w| < 1\}$ for $0<r\leq \rho_4$.

\item The number $\rho_5$ is the smallest real root of the equation in (\ref{K2-S-c}).
Using \cite[Lemma 2.5]{MR3536076}, the function $f \in S^{*}_{c}$ if
\[\frac{\splitfrac{(u+v)r+(8+2 u v)r^2+(9 u+6 v)r^3+(10+6 u v)r^4+(9 u+3 v)r^5}{+(2+2 u v)r^6+ur^7}}{ (1 + r u) (1 + r^2 + r v)(1 -r^4)} \leq \frac{1+r^4}{1-r^4}-\frac{1}{3}.\]
Thereby, the disc (\ref{discforK2}) lies inside  $\phi_{c}(\mathbb{D})$, where $\phi_{c}(z)=1+(4/3)z+(2/3)z^2$, if $0<r\leq \rho_5$.

\item The number $\rho_6$ is the smallest real root of the equation in (\ref{K2-S-sin}).
When $1-\sin 1 < a \leq 1+\sin 1$, an application of \cite[Lemma 3.3]{MR3913990} gives that the function $f \in S^{*}_{\sin}$ if
\[\frac{\splitfrac{(u+v)r+(8+2 u v)r^2+(9 u+6 v)r^3+(10+6 u v)r^4+(9 u+3 v)r^5}{+(2+2 u v)r^6+ur^7}}{ (1 + r u) (1 + r^2 + r v)(1 -r^4)} \leq \sin 1 -\frac{2r^4}{1-r^4}.\]
Hence, the disc in (\ref{discforK2}) is contained in the region $\phi_{s}(\mathbb{D})$, where $\phi_{s}(z)=1+\sin z$, for $0 < r \leq \rho_6$.

\item The number $\rho_7$ is the smallest real root of the equation in (\ref{K2-S-lune}).
Considering $\sqrt{2}-1 < a < \sqrt{2}+1$ and using \cite[Lemma 2.1]{MR3718233}, the disc (\ref{discforK2}) is contained in the region $\{w \in \mathbb{C}: 2|w| > |w^2-1|\}$ provided
\[\frac{\splitfrac{(u+v)r+(8+2 u v)r^2+(9 u+6 v)r^3+(10+6 u v)r^4+(9 u+3 v)r^5}{+(2+2 u v)r^6+ur^7}}{ (1 + r u) (1 + r^2 + r v)(1 -r^4)} \leq \frac{1+r^4}{1-r^4}+1-\sqrt{2}.\]
For $0<r \leq \rho_7$, the function $f$ satisfies
\[2\left|\frac{zf'(z)}{f(z)}\right| > \left|\left(\frac{zf'(z)}{f(z)}\right)^2-1\right|.\]

\item The number $\rho_8$ is the smallest real root of the equation in (\ref{K2-S-R}).
when $2(\sqrt{2}-1) < a \leq \sqrt{2}$, by \cite[Lemma 2.2]{MR3496681} the function $f \in S^{*}_{R}$ if
\[\frac{\splitfrac{(u+v)r+(8+2 u v)r^2+(9 u+6 v)r^3+(10+6 u v)r^4+(9 u+3 v)r^5}{+(2+2 u v)r^6+ur^7}}{(1 + r u) (1 + r^2 + r v)(1 -r^4)} \leq \frac{1+r^4}{1-r^4}+2-2\sqrt{2}.\]
Note that for $0 < r \leq \rho_8$, the disc (\ref{discforK2}) is contained in the region $\phi_{0}(\mathbb{D})$, where $\phi_{0}(z):=1+(z/k)((k+z)/(k-z))$ and $k=1+\sqrt{2}$.


\item Let $\rho_{9}$ denote the smallest real root of the equation (\ref{K2-S-Ne}).
When $1 \leq a < 5/3$, in view of \cite[Lemma 2.2]{MR4190740}, $f \in S^{*}_{Ne}$ if
\[\frac{\splitfrac{(u+v)r+(8+2 u v)r^2+(9 u+6 v)r^3+(10+6 u v)r^4+(9 u+3 v)r^5}{+(2+2 u v)r^6+ur^7}}{ (1 + r u) (1 + r^2 + r v)(1 -r^4)} \leq \frac{5}{3}-\frac{1+r^4}{1-r^4}.\]
Thus, the disc (\ref{discforK2}) is contained in the region $\phi_{Ne}(\mathbb{D})$ for $0 < r \leq \rho_{9}$, where $\phi_{Ne}(z)=1+z-z^3/3$.

\item Let $\rho_{10}$ denote the smallest real root of the equation (\ref{K2-S-SG}).
Applying \cite[Lemma 2.2]{MR4044913} for $1 \leq a < 2e/(1+e)$ gives $f \in S^{*}_{SG}$ if
\[\frac{\splitfrac{(u+v)r+(8+2 u v)r^2+(9 u+6 v)r^3+(10+6 u v)r^4+(9 u+3 v)r^5}{+(2+2 u v)r^6+ur^7}}{ (1 + r u) (1 + r^2 + r v)(1 -r^4)} \leq \frac{2e}{1+e}-\frac{1+r^4}{1-r^4}.\]
Hence, the disc (\ref{discforK2}) is contained in the region $\phi_{SG}(\mathbb{D})$ for $0 < r \leq \rho_{10}$, where $\phi_{SG}(z)=2/(1+e^{-z})$.
\end{enumerate}
\end{proof}

\section{Conclusion}
Three classes of analytic functions satisfying some conditions involving the ratio $f/g$ were introduced.
Several sharp radii estimates were determined for these classes by constraining the second coefficients of the functions.
Various generalizations of the existing work in the same field are also discussed.
The technique used in this paper can be imitated in finding radius estimates for various classes of analytic functions involving fixed second coefficient.

\end{document}